\theoremstyle{plain}
\newtheorem{thm}{Theorem}[section]
\newtheorem{theorem}{Theorem}[section]
\newtheorem{lemma}[thm]{Lemma}
\newtheorem{proposition}[thm]{Proposition}
\theoremstyle{definition}
\newtheorem{defn}{Definition}[section]
\newtheorem{definition}{Definition}[section]
\newtheorem{remark}[defn]{Remark}
 \DeclareMathOperator{\im}{Im}
\DeclareMathOperator*{\res}{\mathrm{Res}}
\theoremstyle{remark}
\def\I{\mathrm{i}}
\def\D{{\mathbb D}}
\def\R{{\mathbb R}}
\def\C{{\mathbb C}}
\def\P{{\mathbb P}}
\def\U{{\mathbb U}}
\def\L{{\mathbb L}}
\def\Z{{\mathbb Z}}
\begin{document}

\title{Quadrature for quadrics}
\author{Bj\"orn Gustafsson\textsuperscript{1}}
\date{\today}
\maketitle

\abstract{We make a systematic investigation of quadrature properties for quadrics, namely integration of
holomorphic functions over planar domains bounded by second degree curves. A full understanding requires extending
traditional settings by allowing domains which are multi-sheeted, in other words domains which really are branched
covering surfaces of the Riemann sphere, and in addition usage of the spherical area measure instead of the Euclidean.

The first part of the paper discusses two different points of view of real algebraic curves: traditionally they live in the real projective plane,
which is non-orientable, but for their role for quadrature they have to be pushed to the Riemann sphere.

The main results include clarifying a previous theorem (joint work with V.~Tkachev), which says that a branched covering map 
produces a domain with the required quadrature properties if and only it extends to be meromorphic on the double of the 
parametrizing Riemann surface. In the second half of the paper domains bounded by ellipses, hyperbolas, parabolas and their inverses
are studied in detail, with emphasis on the hyperbola case, for which some of the results appear to be new.}

\medskip\noindent
{\it Subject classification:} 14A25, 14H55, 30F10, 31A05, 51M15.

\medskip\noindent
{\it Keywords:} Conic, ellipse, hyperbola, parabola, quadrature domain, real algebraic curve,
symmetric Riemann surface, branched covering map. 

\noindent\footnotetext[1]
{Department of Mathematics, KTH Royal Institute of Technology, 100 44 Stockholm, Sweden.
Email: \tt{gbjorn@kth.se}}

\tableofcontents


\section{Introduction}

This note is inspired by a question by Henrik Shahgholian on why exterior of ellipses and parabolas in the plane are
null-quadrature domains, but not the exterior of hyperbolas.  Partial answers, from the point of view of potential theory
and PDE (partial differential equations), can be read off from papers of Shahgholian himself and other mathematicians, but here
we shall try to give a more philosophical answer in terms of what we call multi-sheeted algebraic domains, or quadrature Riemann surfaces.

The paper is more oriented towards real algebraic geometry than to PDE, but the two research areas are
linked via potential theory, specifically quadrature domains. The title of the paper is a little of a play with words: quadrature refers
to numerical integration via the latin noun `quadrature', making square-shaped. After having made a region square-shaped,
or dived it into small squares, one can read off the area of any arising square by taking the power two of (squaring) its side length. 
For related reasons, `quadrics' refers to curves described by equations of degree two. The word `conic' is more or less synonymous, 
but has a more geometric slant as the intersection between a cone and a plane. The actual subject of this paper eventually becomes integration
of holomorphic functions over domains bounded by algebraic curves of degree two, essentially ellipses, hyperbolas and parabolas.

While the subject of conic sections is very old,
potential theory dates back ``only'' to Newton's theory of gravitation (1687). But already Newton himself
realized that conic sections have a special role in gravitational theory. 
Besides the role of these curves in planetary motions there is the theorem
that ellipsoidal homoeoids  (elliptic homothetic shells)  give zero gravitational attraction inside the hole. This was proved in
three dimensions by Newton himself (at least to some part) and later extended by P.S~Laplace and J.~Ivory, 
see \cite{Kellogg-1967, Sakai-1983a}, and in particular the recent paper \cite{Izmestiev-Tabachnikov-2017} which contains
a wealth of geometric aspects. For the life of James Ivory, see \cite{Craik-2000}.
Several papers produced by the V.~Arnold school, for example
\cite{Arnold-1982, Arnold-1983, Vainstein-Shapiro-1985, Arnold-Khesin-1998}
contain further results for hyperboloids, and 
\cite{Sakai-1981, DiBenedetto-Friedman-1986, Friedman-Sakai-1986, Shapiro-1987, Entov-Etingof-1991, Shahgholian-1991a, Karp-1994, Karp-2008, 
Caffarelli-Karp-Shahgholian-2000, Khavinson-Lundberg-2014} give links between null quadrature domains and regularity theory for free boundaries. 

The property of ellipsoidal shells creating no interior gravity can be understood in terms of a Laplacian growth process (or Hele-Shaw
moving boundary problem) which preserves harmonic moments and which is related to quadrature domains. A few general sources are
\cite{Varchenko-Etingof-1992, Margulis-1995, Gustafsson-Teodorescu-Vasiliev-2014}.  See \cite{Gustafsson-Shapiro-2005} for a an
overview of the theory of quadrature domains.
Topology of quadrature domains, partly in connection with Hilbert's sixteenth problem, is studied in \cite{Lee-Makarov-2016}.  

In the present paper we make a systematic investigation of quadrature properties for planar domains bounded by second degree curves,
including to some extent inversions of such domains. We get rid of difficulties caused by exterior domains having infinite area by working with spherical
area measure instead of Euclidean area measure. This does not change the class of quadrature domains as a whole, in that sense the
change is inessential, however some computations become a more involved.

Another difference compared to traditional treatments is that we allow the domains to be ``multi-sheeted'', i.e. to be covered several times by a
branched covering map from some uniformizing surface. The latter is to be one
half of a symmetric, or ``real'', Riemann surface of ``dividing type''. This simply means the Riemann surface (assumed compact) has an 
anti-conformal involution and that it becomes disconnected after removal of the fixed points of the involution. 

The boundaries of so arising multi-sheeted quadrature domains, or quadrature Riemann surfaces in the terminology of Sakai \cite{Sakai-1988},
are algebraic curves, and essentially one half of all algebraic curves arise in this way. Therefore much of our treatment falls into the subject
of real algebraic geometry. But here arises a kind a dichotomy: real algebraic curves have their natural loci in the real projective plane $\R\P^2$
while the potential theoretic considerations rather take place in the Riemann sphere $\C\P^1\cong \C\cup\{\infty\}$.

Looking with positive eyes 
this actually makes the theory more interesting and rich, and there are even some surprises to come up. While the theories for the ellipse and
parabola are rather straight-forward, the hyperbola cannot be discussed without running into multi-sheeted domains and dichotomies between
$\R\P^2$ and $\C\P^1$: in $\R\P^2$ the hyperbola is an everywhere smooth curve (even at points of infinity), it is an ``oval'' in the terminology
of real algebraic geometry. The interior of the oval are the two pieces containing the focal points, while the exterior is the intermediate
region, which topologically is a M\"obius strip (recall that $\R\P^2$ is non-orientable). See Figure~\ref{fig:hyperbola2}. The corresponding
multi-sheeted region in $\C\P^1$ which has potential theoretic significance consists of one of the pieces containing the focal point, having multiplicity
two and with the focal points as branch point, continued with multiplicity one across the curve, up to the other component of the curve.
See Figure~\ref{fig:hyperbola3}. And in $\C\P^1$ the hyperbola has a singularity at infinity, in fact it becomes a lemniscate upon inversion. 

After an initial discussion of algebraic curves in projective spaces, partly in terms of corresponding projection maps (Sections~\ref{sec:generalities}
and \ref{sec:general}), we turn in Section~\ref{sec:quadrature} to basic definitions and general properties of quadrature domains in our setting. 
This part extends, and partly clarifies, a previous treatment \cite{Gustafsson-Tkachev-2011}. 
The single main result of the paper is Theorem~\ref{thm:qi}. Some computational details for its proof are deferred to an 
appendix Section~\ref{sec:appendix}. 

After these general matters in Sections~\ref{sec:generalities}--\ref{sec:quadrature} we discuss systematically in the remaining 
Sections~\ref{sec:ellipses}--\ref{sec:inversions}  
the different special cases arising for second degree curves: the ellipse, the hyperbola, the parabola, and inversions of these curves.
These sections are to a large extent computational in nature, but the computations are not always easy. One of the main outcomes
is that hyperbolas are indeed multi-sheeted quadrature domains. If one works with Euclidean area measure instead
of the spherical, then they are multi-sheeted null quadrature domains 
 
The text is accompanied with hopefully helpful pictures, constructed in most cases by ``Ti$k$Z''. 
These are however not computationally exact, they are mainly for illustrational purpose. 
 

\section{The genus and Riemann-Hurwitz formulas}\label{sec:generalities} 

For a plane algebraic curve $P(z,w)=0$ of degree $d$ and genus $\texttt{g}$ the {\it genus formula} says that
\begin{equation}\label{genus formula}
\texttt{g}+\text{number of singular points}= \frac{(d-1)(d-2)}{2}.
\end{equation}
It is assumed in this formula that the curve is irreducible, and for our considerations it will in addition be 
real in the sense that $P(z,w)$ is real-valued when $w=\bar{z}$.
The coordinate functions $z$ and $w$ can be viewed as
meromorphic functions on that compact Riemann surface $M$ which uniformizes the curve, and thereby 
these coordinate functions can also be viewed as branched covering maps of $M$ onto the Riemann sphere. 
For such a map, let $m$ denote the number of sheets and $\texttt{b}$ the number of branch points
(in general these will be different for $z$ and $w$). Then the {\it Riemann-Hurwitz formula} says that
\begin{equation}\label{Hurwitz}
2m-\texttt{b}=2(1-\texttt{g}).
\end{equation}
We refer to \cite{Kendig-1977, Griffiths-Harris-1978, Clemens-1980, Farkas-Kra-1992} for details. 

One fundamental difference between the genus formula
and the Riemann-Hurwitz formula is that the genus formula strictly speaking refers to the completion of the curve in two dimensional
complex projective space $\C\P^2$ (for example, one has to count the singular points within that space), 
and from that point of view the corresponding real algebraic curve $P(z,\bar{z})=0$  becomes a subset  of the real projective space $\R\P^2$.
On the other hand, (\ref{Hurwitz}) refers to a meromorphic function  $z :M\to\C\P^1$,
and this makes essentially the same curve become a subset of the Riemann sphere $\C\P^1$. The finite parts of $\R\P ^2$
and $\C\P^1$ can both be naturally be identified with $\R^2\cong \C$, but when it comes to points of infinity they differ considerably:
$\C\P^1$ has only one point of infinity whereas $\R\P^2$ has a whole projective line of infinities, representing asymptotic directions.
In addition, $\R\P^2$ is non-orientable. In fact, denoting by $S^2$ the unit sphere in $\R^3$, there are homoemorphisms $\C\P^1\cong S^2$ and 
 $\R\P ^2\cong S^2/(\text{antipodes identified})$.  

In the case of quadrics (conics) $d=2$, hence the genus formula says that $\texttt{g}=0$ and that there are no singular points. 
Thus $\texttt{b}=2(m-1)$, which gives $\texttt{b}=2$ in most cases to be discussed below. 
One exception is the circle, for which $m=1$ and so $\texttt{b}=0$.
We shall also discuss curves obtained by making inversions (such as the antipodal map $z\mapsto -1/\bar{z}$) of quadrics. This will lead to
curves like hippopedes (special cases of hypocycloids), lemniscates and cardioids.
Such inversions will not change the genus, since the curve is still uniformized by the same Riemann surface, but the degree of the curve may change.
Typically it will  change from $d=2$  to $d=4$, and then the genus formula tells that there will be three singular points. These will turn out
to have roles of being quadrature nodes, ``special points'', intersections between smooth branches, and cusps, depending on the curve.
In general, some of the singular points may be invisible in the real.


\begin{figure}
\begin{center}
\includegraphics[scale=0.8]{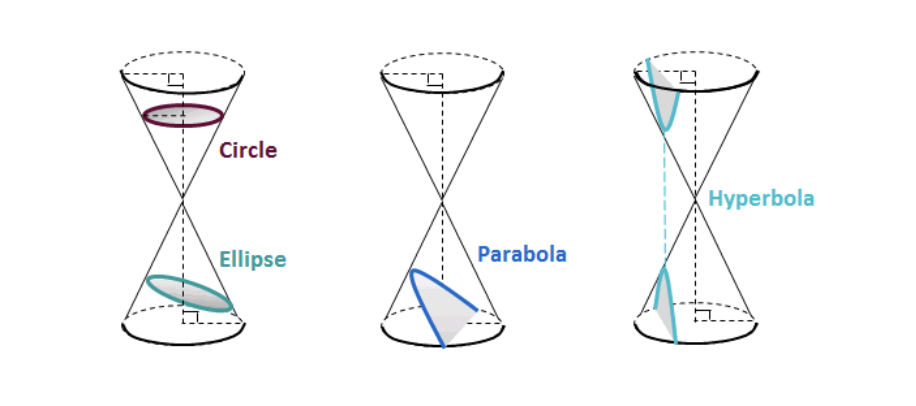}
\end{center}
\caption{The conic sections. Picture copied from mathhints.com.}
\label{fig:conics}
\end{figure}


\section{Algebraic curves in projective spaces}\label{sec:general}

\subsection{General}

We shall be concerned with real algebraic curves,
the meaning of ``real'' being that the polynomial $Q(x,y)=P(x+\I y, x-\I y)$ has real coefficients.
Then the coefficients of $P(z,w)$ has a hermitean symmetry,
and the complex curve  $\{(z,w)\in\C^2:P(z,w)=0\}$ admits the involution
$(z,w)\mapsto (\bar{w},\bar{z})$. In general we will denote all anti-conformal involutions by the same letter $J$,
the exact meaning then to be understood from the context. The real locus of $P$, namely 
\begin{equation}\label{V}
V=\{z\in \C: P(z,\bar{z})=0\},
\end{equation}
is the fixed point set of the mentioned involution.

The Riemann surface $M$ associated to the curve $P(z,w)=0$ has a corresponding anti-conformal  involution $J:M\to M$.
The locus (\ref{V}) can be  obtained from the fixed point set $\Gamma_M$ of $J$, which can also be viewed as the symmetry line of $M$,
essentially as the image $z(\Gamma_M)$ under the coordinate function $z$, the latter considered as a meromorphic function on $M$. 
It can alternatively be obtained using $w$, as $V=\overline{w(\Gamma_M)}$.

The above picture is actually somewhat simplified, for example one has to carefully compactify the above 
algebraic curves in projective spaces, and it also turns out that $V$ may contain accidental points, not
coming from $\Gamma_M$. We elaborate these details below.

The two dimensional real projective space is the set of equivalence classes (or ratios)
$$
\R\P^2=\{[\,t:x:y\,]: (t,x,y)\in\R^3\setminus (0,0,0) \},
$$
representing lines through the origin in $\R^3$. As indicated, only the ratio counts,
i.e.  $[\,t:x:y\,]=[\,\lambda t:\lambda x:\lambda y\,]$ for any $\lambda\ne 0$, $\lambda\in\R$.
The finite plane $\R^2$ can be naturally embedded in $\R\P^2$ via $(x,y)\mapsto [\,1:x:y\,]$.
The complex projective plane $\C\P^2$ is defined similarly, just that $t,x,y,\lambda$ are allowed to be complex
numbers. However, it will be more suitable in our context to name the coordinates $t,z,w$ and identify the real subspace
via $z=x+\I y$, $w=x-\I y$, where $x,y\in\R$. We shall thus work with
$$
\C\P^2=\{[\,t:z:w\,]: (t,z,w)\in\C^3\setminus \{(0,0,0)\} \},
$$
provided with the anticonformal  involution
$$
[\,t:z:w\,]\mapsto [\,\bar{t}:\bar{w}:\bar{z}\,].
$$
The set of fixed points for the involution is the ``real diagonal''  in $\C\P^2$, for which $t\in \R$ and $w=\bar{z}$.
Thus it consists of the points
$$
[\,t:z:\bar{z}\,]=[\,t:x+\I y:x-\I y\,] \quad (t,x,y\in\R).
$$

In this version of the real projective space the third component, $\bar{z}=x-\I y$, is redundant and can be discarded.
Therefore the real projective space can effectively be identified with the set of equivalence classes 
\begin{equation}\label{Delta}
\R\P^2_{\rm red}=\{[\,t:z\,]: \, t\in\R, z\in\C \}=\{[\,t:x+\I y\,]: \, t,x,y\in \R\},
\end{equation}
where two pairs represent the same point if they are related by a nonzero {real} factor. 
The subscript ${\rm red}$ stands for ``reduced'', meaning that an inessential component
has been discarded.
The definition of set $\R\P^2_{\rm red}$ reminds of that of the one dimensional complex projective space 
$$
\C\P^1=\{[\,t:z\,]:\, (t,z)\in\C^2\setminus (0,0)\},
$$
a model for the Riemann sphere. Here it is always possible to choose $t$ to be real, in fact the two values $t=0$ and $t=1$
cover all possible equivalence classes.  It follows that there is a natural and surjective map
\begin{equation}\label{DeltaCP}
\R\P^2_{\rm red}\to \C\P^1,
\end{equation}
which takes a point $[\,t:z\,]\in\R\P^2_{\rm red}$ to the point with the same name in $\C\P^1$. But in $\C\P^1$ space it may belong to a 
bigger equivalence class. This is more precisely the case when $t=0$:
in $\C\P^1$ all points with $t=0$ are equivalent to $[\,0:1\,]$, i.e. there is only one point of infinity, while
for $\R\P^2_{\rm red}$, two pairs $[\,0:z_1\,]$ and $[\,0:z_2\,]$ represent the same point of infinity
if and only if $z_2=\lambda z_1$ for some $\lambda \in\R\setminus \{0\}$. 
Thus one can take representatives of the form $[\,0:e^{\I\varphi}\,]$, observing then that two $\varphi$ represent the same point if 
and only if they differ by an integer multiple of $\pi$ (not $2\pi$). The latter remark is related
to the fact that $\R\P^2_{\rm red}$ is a non-orientable surface, topologically a ``cross-cap''.
The Riemann sphere, $\C\P^1\cong S^2$ is, on the other hand, orientable.  

In the sequel we shall also semi-complexify the real spaces by identifying $\R^3$ with $\R\oplus \C$ with 
coordinates $(t,z)=(t,x+\I y)$, and also identify $\R\P^2$ with $\R\P^2_{\rm red}$ as in (\ref{Delta}).


\subsection{Ray representations}

In general, projective spaces like $\R\P^2$, $\C\P^2$, $\R\P^1$ and $\C\P^1$ can be defined as the set of lines through
a fixed point in a vector space of one dimension higher than the index given. For example,
\begin{equation}\label{Klines}
\R\P^2=\{\text{lines }K\subset \R^3 \text{ through } (0,0,0)\},
\end{equation}
or, choosing another point, the north pole $N=(1,0,0)$,
\begin{equation}\label{Llines}
\R\P^2=\{\text{lines }L\subset \R^3 \text{ through }N\}.
\end{equation}

Each of the above two representations (\ref{Klines}) and (\ref{Llines}) has its own advantages. In the first case, each line $K$ intersects the unit sphere
$S^2$ at two antipodal points, $p$ and $-p$. Switching to $\R\P^2_{\rm red}$ 
this gives a bijective map
$$
\R\P^2_{\rm red}\to S^2/(\text{antipodes identified}). 
$$
\begin{equation}\label{KKp}
K\mapsto K\cap S^2=\{\pm p\}.\qquad\qquad
\end{equation}
Here one hemisphere can be discarded, say the ``southern'' hemisphere, which gives the description
$$
\R\P^2_{\rm red}\cong (\text{northern hemisphere}) \cup (\text{equator with antipodes identified}),
$$
presenting $\R\P^2_{\rm red}$ indeed as a ``cross-cap''. 

As a further concretization of (\ref{Klines}), each line $K$ which is not horizontal intersects the plane $\{t=1\}$
at exactly one point, $(1,z)\in\R\oplus \C$. This point corresponds to the projective coordinates for the line when written as 
$[\,1:z\,]$, hence corresponds directly to a complex number $z$, which we denote  $z_K=z_K(p)\in\C$, with $p$ as in (\ref{KKp}). 
Obviously $z_K(-p)=z_K(p)$.
Any horizontal line represents a points of infinity for $\R\P^2_{\rm red}$,
and such a line has a projective representation as $[\,0: e^{\I \varphi}\,]$ with $\varphi\in\R$ taken modulo $\pi$. 
See Figure~\ref{fig:stereo} for an illustration.

The second choice (\ref{Llines}) is connected to the stereographic projection, which rather identifies
$S^2$ with $\C\cup \{\infty\}$. Each non-horizontal line $L$ through the north pole
$N=(1,0)\in\R\oplus\C$ intersects $S^2$ in one more point, say $p$, and it also intersects the plane $\{t=0\}$ at one point, 
say $(0,z)$, where then $z=z_L(p)\in\C$. This defines the stereographic projection
$$
S^2\setminus \{N\}\to \C, \quad p\mapsto z_L(p),
$$
which extends to all of $S^2$ by setting $z_L(N)=\infty$.  Thus the horizontal lines $L$ give via the stereographic
projection only one point of infinity, despite there as as many horizontal lines of $L$ as there are of $K$.
On the computational side, the relation between the components of $p\in S^2\setminus\{N\}$
and $z=z_L(p)\in \C$ in the stereographic projection is
\begin{equation}\label{ps}
 p=(\,\frac{|z|^2-1}{|z|^2+1}, \frac{2z}{|z|^2+1})\in \R\oplus\C.
\end{equation}
In the other direction we have
\begin{equation}\label{zzL}
z=z_L(p)=\frac{x_1+\I x_2}{1-x_0}
\end{equation}
if we identify $p$ as the point $p=(x_0,x_1+\I x_2)\in S^2\subset \R\oplus \C$.

The Riemannian metric on $S^2$ as inherited from the ambient Euclidean space is
$$
ds^2=dx_0^2+dx_1^2+dx_2^2,
$$
and it is straightforward to show (and of course well-known) that this becomes
$$
ds^2=\frac{4|dz|^2}{(1+|z|^2)^2}
$$
when expressed in terms of $z=z_L(p)$. The fact that this is of the form $ds^2=\lambda(z)^2|dz|^2$
for some scalar factor $\lambda(z)>0$ means that the map $p\mapsto z_L(p)$ is conformal as a map
between $S^2$ and the Riemann sphere $\C\cup\{\infty\}$.

Thinking next of the Euclidean coordinates (\ref{ps}) for $p\in S^2$ as projective coordinates they are, in this role, 
$$
[\,\frac{|z|^2-1}{|z|^2+1}: \frac{2z}{|z|^2+1}\,]
=[\,|z|^2-1:2z\,]=[\,1:\frac{2z}{|z^2|-1}\,]\in\R\P^2_{\rm red}.
$$
This means that the complex number ${2z_L}/({|z_L|^2-1})$ represents the line $K$, provided $|z_L|\ne 1$. 
Accordingly we have, with $z_K=z_K(p)$ as above,
\begin{equation}\label{zKL}
z_K=\frac{2z_L}{|z_L|^2-1},
\end{equation}
or 
\begin{equation}\label{zKzL}
\frac{1}{z_K}=\frac{1}{2}(\bar{z}_L-\frac{1}{z_L}),
\end{equation}
the latter showing that $1/z_K$ is a harmonic function of $1/z_L$. Expressed in terms of $p=(x_0,x_1+\I x_2)$ we have
\begin{equation}\label{zK}
z_K(p)=\frac{x_1+\I x_2}{x_0}. 
\end{equation}
In contrast to $z_L(p)$, the map $p\mapsto z_K(p)$ is {not} conformal. Indeed, 
the relationship (\ref{zKL}) between $z_L$ and $z_K$ is not analytic (or anti-analytic),
hence $z_L$ and $z_K$ cannot both be conformal. 

When $|z_L|=1$ the line $K$ is horizontal, which means that any such $z_L$ represents a point of infinity in $\R\P^2$, then with $z_L$ 
and $-z_L$ representing the same point. We finally note (or recall) that
$$
z_L(-p)=-\frac{1}{\overline{z_L(p)}}, \quad z_K(-p)=z_K(p).
$$


\begin{figure}
\begin{center}
\includegraphics[width=\textwidth, scale=0.8, trim=100 350 5 200]{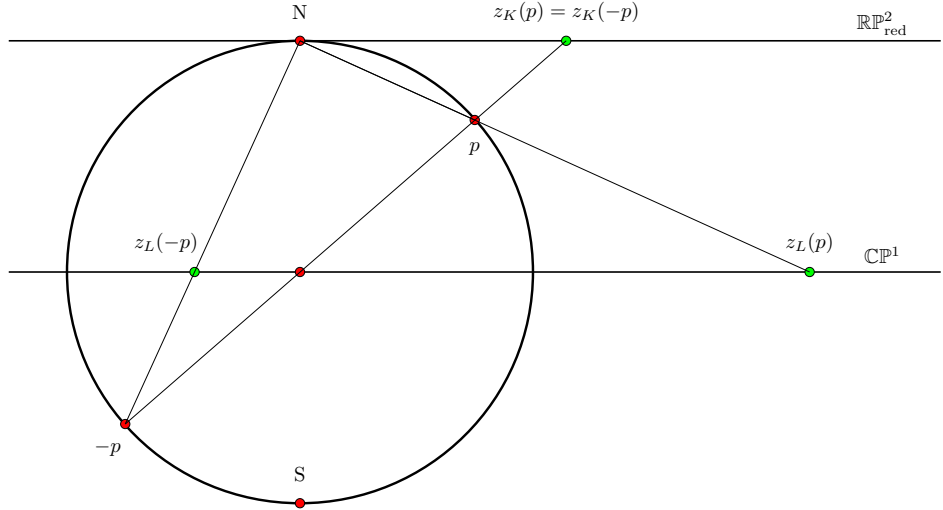}
\end{center}
\caption{Illustration of projections $p\mapsto z_L(p)$ and $p\mapsto z_K(p)$ of sphere onto complex
and real planes with their infinities (two-dimensional view).}
\label{fig:stereo}
\end{figure} 


We summarize everything as follows (see again Figure~\ref{fig:stereo}).
\begin{lemma}\label{lem:pp} 
Points $p=(x_0,x_1+\I x_2)$ on the unit sphere $S^2\subset \R\oplus\C$ have two natural representations as (extended)
complex numbers, namely as  $z_L$ and $z_K$ defined below.

1) Via the stereographic projection, which is a conformal map 
$$
z_L: \quad S^2\to \C\cup \{\infty\}\cong \C\P^1
$$
defined by
$$
z_L(p):=
\begin{cases}
\frac{x_1+ \I x_2}{1-x_0}, \quad &  p\ne N\quad  (x_0\ne 1),\\
\infty, \quad & p=N \quad (x_0=1).
\end{cases}
$$
Alternatively, considering $\C\P^1$ to be the target for stereographic projection,
$$
z_L(p):=
\begin{cases}
 [\,1-x_0 : x_1+\I x_2\,], \quad &  p\ne N,\\
[\,0:1\,], \quad & p=N.
\end{cases}
$$
In the converse direction $p$ can be recovered from $z_L=z_L(p)$ by
$$
p=(\frac{|z_L|^2-1}{|z_L|^2+1}, \frac{2z_L}{|z_L|^2+1}).
$$

2) As a map onto the real projective plane, bijective after identification of antipodes:
$$
S^2/J\to \R\P^2_{\rm red}.
$$
This is defined by
$$
 \pm p\mapsto
\begin{cases}
 [ \, 1:z_K(p)\, ],\quad &x_0\ne 0,\\
[\,0:e^{\I \varphi}\,], \quad &x_0=0,
\end{cases}
$$
where 
$$
z_K(p)= \frac{x_1+\I x_2}{x_0}, \quad x_0\ne 0,
$$
and $\varphi \in \R/\pi \Z$ is defined by $x_1+\I x_2=e^{\I\varphi}$ when $x_0=0$
\end{lemma}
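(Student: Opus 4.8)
The plan is to obtain both representations by the same elementary device — intersecting a line with a coordinate plane — and then to read off conformality and bijectivity from material already assembled above, so that no new computation of substance is required.

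For Part 1) I would parametrize the line $L$ of (\ref{Llines}) through the north pole $N=(1,0,0)$ and a point $p=(x_0,x_1+\I x_2)\in S^2\setminus\{N\}$ as $N+s(p-N)$, and solve for the value of $s$ at which the first coordinate vanishes, i.e. the intersection with the plane $\{t=0\}\cong\C$. This yields $s=1/(1-x_0)$ and hence $z_L(p)=(x_1+\I x_2)/(1-x_0)$, which is the stated formula; clearing the denominator gives the homogeneous form $[\,1:z_L\,]=[\,1-x_0:x_1+\I x_2\,]$, and letting $p\to N$ (so $x_0\to 1$) sends $z_L\to\infty=[\,0:1\,]$, consistent with the assignment at $N$. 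The inverse direction is precisely the relation (\ref{ps}) established earlier, and conformality is immediate from the metric computation above, which exhibits the pullback of $dx_0^2+dx_1^2+dx_2^2$ as $4|dz_L|^2/(1+|z_L|^2)^2$, a positive scalar multiple of $|dz_L|^2$.

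For Part 2) I would run the analogous intersection, but now for the line $K$ of (\ref{Klines}) through the origin and $p$, parametrized as $\{sp:s\in\R\}$, against the plane $\{t=1\}$. When $x_0\neq 0$ this forces $s=1/x_0$ and gives $z_K(p)=(x_1+\I x_2)/x_0$, recovering (\ref{zK}); when $x_0=0$ the line is horizontal and represents a point of infinity, with homogeneous representative $[\,0:x_1+\I x_2\,]=[\,0:e^{\I\varphi}\,]$, where $|x_1+\I x_2|=1$ because $p\in S^2$. Since replacing $p$ by $-p$ changes the sign of numerator and denominator simultaneously, one has $z_K(-p)=z_K(p)$, so the assignment descends to $S^2/J$. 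To see it is a bijection onto $\R\P^2_{\rm red}$ I would simply write down the inverse: a finite point $[\,1:z\,]$ with $z=u+\I v$ pulls back to the unique antipodal pair $\pm p$ on the line through the origin in direction $(1,u,v)$, explicitly $x_0=\pm(1+|z|^2)^{-1/2}$ and $(x_1,x_2)=x_0(u,v)$, while an infinite point $[\,0:e^{\I\varphi}\,]$ pulls back to $\pm(0,e^{\I\varphi})$ on the equator.

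The one place demanding care is the equator $x_0=0$ (equivalently $|z_L|=1$), where the two sphere points $p$ and $-p$ are genuinely identified: there $-p=(0,e^{\I(\varphi+\pi)})$, and well-definedness requires $[\,0:e^{\I(\varphi+\pi)}\,]=[\,0:-e^{\I\varphi}\,]=[\,0:e^{\I\varphi}\,]$ in $\R\P^2_{\rm red}$, which holds because the factor $-1$ is a real admissible scalar. This is exactly the reason $\varphi$ must be read modulo $\pi$ rather than $2\pi$, i.e. the cross-cap (non-orientability) phenomenon noted earlier, and it is the only genuine subtlety; keeping the finite and infinite cases aligned between the two representations is the main thing to watch, but beyond this the verification is routine algebra.
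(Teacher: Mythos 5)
Your proposal is correct and takes essentially the same approach as the paper: Lemma~\ref{lem:pp} is presented there as a summary of the preceding ray-representation discussion, in which $z_L$ and $z_K$ are obtained exactly as you do, by intersecting the lines $L$ (through $N$) and $K$ (through the origin) with the planes $\{t=0\}$ and $\{t=1\}$ respectively, with the inverse formula being (\ref{ps}), conformality read off from the metric computation $ds^2=4|dz_L|^2/(1+|z_L|^2)^2$, and the equatorial identification handled via $\varphi$ taken modulo $\pi$. Your explicit check of well-definedness on the equator and the written-out inverse for part 2) are the same points the paper makes, so nothing is missing.
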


The maps in the lemma are, in summary,
\begin{equation}\label{diagram}
\C\cup\{\infty\} \stackrel{\cong}{\longrightarrow}\C\P^1\stackrel{\cong}\longrightarrow S^2 \longrightarrow S^2/J 
\stackrel{\cong}\longrightarrow\R\P^2_{\rm red}\longrightarrow\C\P^1\stackrel{\cong}\longrightarrow \C\cup\{\infty\}
\end{equation}
For finite points the associations are given by
$$
z_L\mapsto [\,1: z_L\,]\mapsto p \mapsto \{\pm p\}\mapsto [\,1:x_K+\I y_K\,]\mapsto [1:z_K\,]\mapsto z_K
$$
and for points of infinity by
$$
\infty\mapsto [\,0:1\,]\mapsto N\mapsto \{N,S\} \mapsto [\,0:e^{\I\varphi}\,]\mapsto [\,0:1\,]\mapsto \infty
$$
where $S=(-1,0)$ is the south pole.


\section{Quadrature domains}\label{sec:quadrature}

\subsection{Classical quadrature domains}

The traditional definition of a {\it quadrature domain} \cite{Gustafsson-Shapiro-2005} is that it is a bounded domain (or open set) 
$\Omega\subset \C$ for which there exist finitely many points $z_1,\dots,z_m\in\Omega$, integers $m_j\geq 1$, 
and coefficients $a_{kj}\in\C$ such that the identity
\begin{equation}\label{qi0}
\frac{1}{\pi}\int_\Omega h(z) dxdy=\sum_{k=1}^m\sum_{j=0}^{m_k-1} a_{kj} h^{(j)}(z_k)
\end{equation}
holds for all integrable analytic functions $h$ in $\Omega$. The simplest example is the unit disk $\D$, for which
the mean-value property for analytic functions gives the quadrature identity
\begin{equation}\label{qidisk}
\frac{1}{\pi}\int_\D h(z)dxdy = h(0).
\end{equation}

Quadrature domains are known to be bounded by algebraic curves.
In fact, if $\Omega$ satisfies (\ref{qi0}) then $\partial\Omega$ is a real algebraic variety as in (\ref{V}),
possibly minus finitely many points, which then are singular point of the variety. Solving the algebraic equation
$P(z,\bar{z})=0$ in (\ref{V}) for $\bar{z}$ gives the {\it Schwarz function} $S(z)$ for $\partial\Omega$. Then
$P(z,S(z))=0$ holds identically, and
\begin{equation}\label{Sz}
S(z)=\bar{z} \,\, {\rm on }\,\,\partial\Omega.
\end{equation}
The latter equation is the defining property of a Schwarz function, generally required to be holomorphic in an at least a 
one-sided neighborhood of $\partial\Omega$. 
The main statement \cite{Aharonov-Shapiro-1976} in this context is that $\Omega$ is a quadrature domain if and only if 
$\partial\Omega$ has a Schwarz function which extends to be  meromorphic in all of $\Omega$. 

A few sources in book form for quadrature domains are \cite{Davis-1974, Sakai-1982, Shapiro-1992, Varchenko-Etingof-1992}.
In the latter text quadrature domains as in (\ref{qi0}) are named an {\it algebraic domains}. This is an appropriate terminology
even though quadrature domains only represent a small subclass of all domains bounded
by algebraic curves. In order to diminish the gap attempts were made in
\cite{Gustafsson-Tkachev-2011} to extend the class of quadrature domains/algebraic domains
 by first of all working with the spherical area measure on the Riemann sphere
instead of Euclidean measure in the plane, and secondly by allowing multiply covered domains. The so arising class of
{\it multi-sheeted algebraic domains}, or by another name, used in \cite{Sakai-1988},  {\it quadrature Riemann surfaces}, 
covers essentially one half of all domains bounded by algebraic curves. This class consists of those multiply covered
domains which are images of one side of a symmetric (or ``real'') Riemann surface of ``dividing type'' under a 
function which is meromorphic on the entire surface. A symmetric Riemann surface is said to be of dividing type if the symmetry line
disconnects the surface onto two halves.

One simple example of a domain bounded by an algebraic curve, which is not a quadrature domain in the sense of (\ref{qi0}), is the interior
of any non-circular ellipse, for example the domain defined by
$$
x^2+2y^2<1.
$$
On the other hand, the exterior (with respect to the complex plane) of the ellipse is an unbounded domain admitting an identity (\ref{qi0}), 
in fact it is a {\it null quadrature domain}:
\begin{equation}\label{nullqd}
\int_{x^2+2y^2>1} h\,dxdy=0
\end{equation}
holds for all integrable holomorphic functions $h$ in the domain. See \cite{Sakai-1981} for a complete result in this direction.
And in a general sense the ellipse can be rescued to belong to the class of multisheeted algebraic domains.

A domain which cannot be rescued by any means is 
\begin{equation}\label{hyperellipse}
x^4+y^4<1.
\end{equation}
The two Riemann surfaces associated to this domain, namely that defined by the algebraic equation
for the boundary and the Schottky double of the domain, are not canonically isomorphic in the way they need
in order to be a quadrature domain. As we shall try to argue below, almost everything goes wrong with the curve
in (\ref{hyperellipse}).

Generic algebraic curves are smooth, i.e. have no singular points. However algebraic curves bounding quadrature domains
as in (\ref{qi0}) usually have many singular points when completed in complex projective space. 
As follows from a classification carried out in \cite{Gustafsson-1988} the right member in
(\ref{qi0}) is a manifestation of some of these singular points. The functional in right member accounts more precisely for $n(n-1)$ of the singular
points, $n=\sum_{k=1}^m m_k$ being the {\it order} of the quadrature identity. The curve bounding the domain in (\ref{hyperellipse}) can be seen 
to have no singular points at all, even when viewed as an algebraic curve in $\C\P^2$. 
This fact is in itself a sign that it cannot bound any quadrature domain.

Using the genus formula (\ref{genus formula}), the absence of singular points 
in addition shows that the mentioned algebraic curve has genus three, while the real locus, defining the boundary in (\ref{hyperellipse}),
consists of only one closed curve. One single closed curve cannot separate a surface of genus three into two equal halves, hence the 
Riemann surface for (\ref{hyperellipse}) cannot be of ``dividing type''. Finally, the Schottky double of the domain in (\ref{hyperellipse}) 
obviously has genus zero, not three.

Somewhat similar results as those for ellipses hold for parabolas, but hyperbolas are different, and indeed quite interesting.
We start by considering just one example, namely the open set defined by
\begin{equation}\label{hyperbola00} 
x^2-y^2>1.
\end{equation}
As a subset of the complex plane, or of the Riemann sphere, it has two components. As seen from the point of infinity,
for example after an antipodal inversion, the two components look like the two holes in  the figure ``$\infty$'', i.e. the curve becomes a lemniscate. 
However, when viewed as a subset of $\R\P^2_{\rm red}$ the set (\ref{hyperbola00}) becomes connected via all asymptotic 
lines with directions $y=cx$, $|c|<1$. 
The curve $x^2-y^2=1$ is an ``oval'' in a terminology used in real algebraic geometry, see {\cite{Shafarevich-1977, Gross-Harris-1981, Harris-1992},
and the domain defined by (\ref{hyperbola00}) is in that terminology the interior of it.

The exterior of the oval, i.e.  the complement of (\ref{hyperbola00}) in $\R\P^2_{\rm red}$, 
is topologically a M\"obius strip. This is completely consistent with $\R\P^2_{\rm red}$ being topologically a ``cross-cap'', which is non-orientable
and can be obtained by sewing a disk and a M\"obius strip along their unique boundary curves. One can see the topology directly by providing
the two branches of the hyperbola with positive directions downwards (decreasing $y$).  This does not seem correct for a boundary of a domain,
but indeed it is in this case, as we try to clarify in Figure~\ref{fig:hyperbola2}. For example, the interior of the oval consists of two pieces in the
picture, but these come with opposite orientation because of the  identification of diametrically opposite points infinitely far away.


\begin{figure}
\begin{center}
\includegraphics[width=\textwidth, scale=1.2, trim=150 300 0 200]{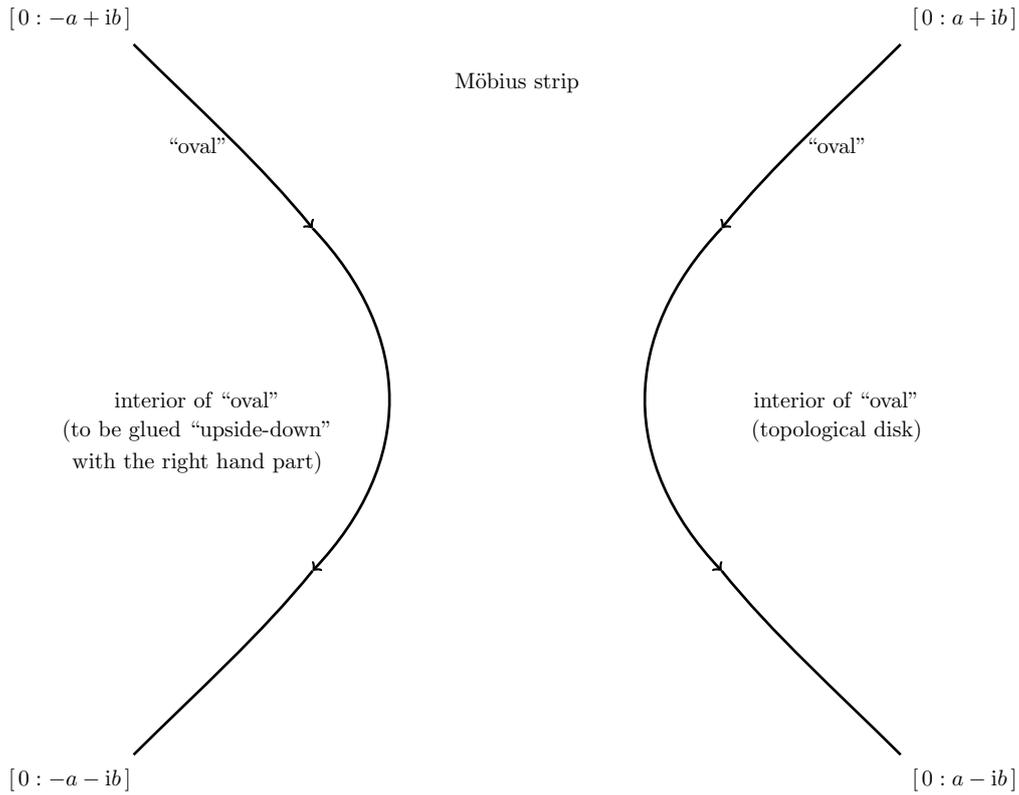}
\end{center}
\caption{The hyperbola $\frac{x^2}{a^2}-\frac{y^2}{b^2}=1$ as a non-singular curve in $\R\P^2_{\rm red}$,
passing through the two points of infinity $[\,0:-a-\I b\,]=[\,0:a+\I b\,]$ and $[\,0:-a+\I b\,]=[\,0:a-\I b\,]$.
The arrows indicate how the hyperbola becomes the single boundary component of a M\"obius strip.}
\label{fig:hyperbola2}
\end{figure} 


\begin{figure}
\begin{center}
\includegraphics[width=\textwidth, scale=1, trim=50 320 50 220]{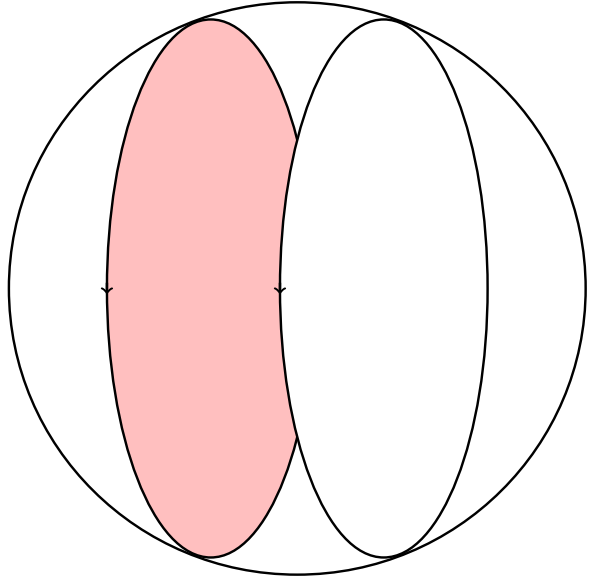}
\end{center}
\caption{Unusual picture of hyperbola: the boundary curves, as in  (\ref{xxx}), of two disks become a hyperbola when antipodal points 
on the sphere are identified. The sphere is in this role a double cover of the projective plane.}
\label{fig:hyperbola5}
\end{figure}


The hyperbola $x^2-y^2=1$ is also the conic obtained by intersecting of the cone 
$$
x^2-y^2=t^2
$$
with the plane $t=1$. Here the cone has two roles: besides being a choice of standard cone as in Figure~\ref{fig:conics},
it gives the equation for our special hyperbola expressed in homogenous coordinates. Thinking thus of $t,x,y$ as
homogenous coordinates and changing names to $x_0,x_1,x_1$, it is natural to seek a representative of
$[\,t:x+\I y\,]=[\,x_0:x_1+\I x_2\,]$ on $S^2$. Such a point $p=(x_0,x_1+\I x_2)$ is to satisfy $x_0^2+x_1^2+x_2^2=1$, in addition to
$x_1^2-x_2^2=x_0^2$. This gives the two circles
\begin{equation}\label{xxx}
x_1=\pm\frac{1}{\sqrt{2}}, \quad x_0^2+x_2^2=\frac{1}{2},
\end{equation}
representing the intersection of $S^2$ with two parallel planes. See Figure~\ref{fig:hyperbola5}. 

The interpretation of the above is simply that we have identified, via (\ref{xxx}), the tracks $\pm p\in S^2$ of the hyperbola
$x^2-y^2=1$ under the identification $z_K(p)=x+\I y$ in Lemma~\ref{lem:pp}. Note that the two circles become only
one circle in $S^2/J$ and that the intermediate part, defined by $-1/\sqrt{2}<x_1<1/\sqrt{2}$, becomes a M\"obius strip. 


\subsection{Quadrature domains with respect to spherical measure}

Leaving now the strict realm of (\ref{qi0}) and aiming at being able to treat domains with infinite area 
without loss of test functions, we shall replace ordinary area measure with spherical
area measure. We identify this with the two-form
\begin{equation}\label{dA}
\frac{dx \wedge dy}{\pi (1+  (x^2+y^2))^2}=\frac{d\bar{z}\wedge dz}{2\pi\I (1+  z\bar{z})^2},
\end{equation}
here scaled so that the entire space $\C\cup\{\infty\}$, representing the sphere $S^2$, has area one.

If $h(z)$ is analytic in a bounded domain $\Omega\subset \C$ and if we set $S(z)=\bar{z}$ 
on $\partial \Omega$, as in (\ref{Sz}), then partial integration gives
$$
\int_\Omega \frac{h(z)d\bar{z}\wedge dz}{(1+ z\bar{z})^2}
=\int_{\partial\Omega}\frac{h(z)\bar{z} dz}{1+  z\bar{z}}=\int_{\partial\Omega}\frac{h(z)S(z) dz}{1+  zS(z)}.
$$
From this identity it follows that $\Omega$ is a quadrature domain for the spherical measure if and only if 
the differential $\frac{S(z)dz}{1+zS(z)}$ has a meromorphic continuation to all of $\Omega$. 
This holds if and only if $S(z)$ is itself meromorphic in $\Omega$, hence $\Omega$ is a spherical quadrature domain if and only
if it is a Euclidean quadrature domain. The spherical quadrature identity then becomes
\begin{equation}\label{intres}
\frac{1}{2\pi\I}\int_\Omega \frac{h(z)d\bar{z}\wedge dz}{ (1+  |z|^2)^2}= \sum_{z\in\Omega}\,{\rm Res\,}\frac{h(z)S({z}) dz}{1+  zS(z)},
\end{equation}
where the right-hand side is a functional, as acting on $h$, on the same form as the right member in (\ref{qi0}).

A different usage of partial integration gives 
$$
\frac{1}{2\pi\I}\int_\Omega \frac{h(z)d\bar{z}\wedge dz}{(1+ z\bar{z})^2}
=\lim_{\varepsilon\to 0}\frac{1}{2\pi\I} \int_{\Omega\setminus \D(0,\varepsilon)} \frac{h(z)d\bar{z}\wedge dz}{(1+ z\bar{z})^2}
$$
$$
=\lim_{\varepsilon\to 0}\frac{1}{2\pi\I}\int_{\partial\D(0,\varepsilon)}\frac{h(z)dz}{z(1+  z\bar{z})}-\frac{1}{2\pi\I}\int_{\partial\Omega}\frac{h(z)dz}{z(1+  z\bar{z})}
$$
$$
=\lim_{\varepsilon\to 0}\frac{1}{2\pi\I}\int_{\partial\D(0,\varepsilon)}\frac{h(z)dz}{z}-\frac{1}{2\pi\I}\int_{\partial\Omega}\frac{h(z)dz}{z(1+  zS(z))}.
$$
Here one sees that if $S(z)$ has no pole at $z=0$, then the two terms in the final member both contribute with residues at $z=0$, but that these 
contributions cancel each other. Thus one can, if $S(z)$ is regular at $z=0$, write the final quadrature formula as 
\begin{equation}\label{intres1}
\frac{1}{2\pi\I}\int_\Omega \frac{h(z)d\bar{z}\wedge dz}{ (1+  |z|^2)^2}
= -\sum_{z\in\Omega\setminus \{0\}}\,{\rm Res}\Big(\frac{h(z)}{1+  zS(z)}\cdot\frac{dz}{z}\Big).
\end{equation}

In order to discuss multiply covered domains the coordinate $z$ has to be substituted by a function $z=f(\zeta)$, where $\zeta$ is a 
coordinate on a uniformizing  Riemann surface $M$, assumed to be ``real'' and of ``dividing type''. 
These assumptions on $M$ mean more precisely that $M$ shall be compact and be provided with an anti-conformal involution 
$J:M\to M$ such that, denoting by $\Gamma_M$ the set of fixed points of $J$,
$M\setminus \Gamma_M$ has exactly two components, to be denoted $M_+$ and $M_-$. It is assumed that $M$ itself is connected.
One of the components, say $M_+$, is selected to be a model for the multi-sheeted domain $\Omega$.

With $M$ as above, any non-constant meromorphic function $f$ on $M$ gives rise to a spherical quadrature domain.
Such a function may be viewed as a holomorphic and possibly branched covering map  $M\to \C\P^1$, and
the resulting quadrature domain $\Omega$ is to be identified with $f(M_+)$ with appropriate multiplicities. 
The multivalued analytic functions on $\Omega$ allowed as test functions
in quadrature identities will be those which become single-valued when lifted to $M_+$. 
Thus the test functions will effectively be integrable analytic functions on $M_+$ itself.

We shall sometimes refer to $M$ as the `parameter' plane (or $\zeta$-plane) and the target space $\C\P^1$
for $f$ as the `physical' plane' (or $z$-plane). 
In terms of the above notations we formalize the concept of a multi-sheeted algebraic domain
exactly as in \cite{Gustafsson-Tkachev-2011}: 

\begin{definition}\label{def:multi-sheeted algebraic domain}
A {\it multi-sheeted algebraic domain} is a pair $(M_+,f)$, where $M=M_+\cup\Gamma_M\cup M_-$ 
is a real compact Riemann surface of dividing type and $f$ is a non-constant meromorphic function on $M$.
Two such pairs, $(M_+,f)$ and $(\tilde{M}_+,\tilde{f})$, are to be considered same if there is a biholomorphic mapping
$\phi:M\to\tilde{M}$ such that $\phi\circ J=\tilde{J}\circ \phi$ and $f=\tilde{f}\circ \phi$, where $J$ and $\tilde{J}$ denote 
the involutions on $M$ and $\tilde{M}$, respectively.
The above means that it is only the image $\Omega=f(M_+)$ with appropriate multiplicities that counts. 
\end{definition}

It was shown in \cite{Gustafsson-Tkachev-2011} that starting with $M$ as in Definition~\ref{def:multi-sheeted algebraic domain} and $f$
a non-constant meromorphic function initially defined only on $M_+$, then $f$ extends to be meromorphic on all of $M$ if and only
if a quadrature identity
\begin{equation}\label{qisf}
\frac{1}{2\pi \I} \int_{M_+} h\,\frac{d\bar{f}\wedge d{f}}{(1+|f|^2)^2} 
=\sum_{k=1}^m\sum_{j=0}^{m_k-1} \alpha_{kj} h^{(j)}(\zeta_k)
\end{equation}
holds for all functions $h$ holomorphic and integrable with respect to the indicated spherical measure in $M_+$.
Here $\alpha_{kj}\in\C$, $\zeta_k\in M_+$ are fixed quadrature data, and the derivatives are to be taken with respect to 
suitable local coordinates around the quadrature nodes. One can also express the quadrature property (\ref{qisf}) without specifying the
coefficients $\alpha_{kj}$ by saying that the left member shall vanish for all holomorphic functions $h$ which vanish at the points
$\zeta_k$ to order at least $m_k$ ($1\leq k\leq m$). 

The ``easy'' implication in the above statement, saying that $f$ being meromorphic on all
of $M$ implies a formula of type (\ref{qisf}), is of course mainly a matter
of computation (partial integration combined with a residue calculus). However, it actually is slightly more delicate than one may at first expect. 
We shall therefore rework this computation, which was not fully carried out in \cite{Gustafsson-Tkachev-2011}.


\subsection{Residue considerations}\label{sec:residues}

A first remark is that one may avoid certain troubles by rotating the Riemann sphere.
M\"obius transformations which preserve the spherical metric are of the form
\begin{equation}\label{Mobius}
w=\frac{az+b}{-\bar{b}z+\bar{a}}, \quad |a|^2+|b|^2=1.
\end{equation}
These orientation preserving rigid transformations can be combined with complex conjugations so that inversions like $w={1}/{\bar{z}}$
and antipodal map $w=-{1}/{\bar{z}}$ are included. Post-composing meromorphic maps like $f$ above with
such M\"obius transformation allows us to rotate the Riemann sphere into suitable positions. 

In particular one can arrange that $f$ has no poles on the symmetry line $\Gamma_M$. 
Indeed, if $f$ has such a pole one may consider instead any function
\begin{equation}\label{g}
g(\zeta)=\frac{af(\zeta)+b}{-\bar{b}f(\zeta)+\bar{a}}, \quad |a|^2+|b|^2=1,
\end{equation}
without such poles. Then
$$
\frac{d\bar{f}\wedge df}{(1+f\bar{f})^2}=\frac{d\bar{g}\wedge dg}{(1+g\bar{g})^2},
$$
and one may work with $g$ in place of $f$.
The above also means that the spherical area form itself is not sensitive for poles, or any other specific values, of $f$.

Since the spherical area form, being a $2$-form, is automatically closed, it is locally exact.
For example, away from poles of $f$ we can write
\begin{equation}\label{nopoles}
\frac{d\bar{f}\wedge df}{(1+f\bar{f})^2}=d\Big(\frac{\bar{f}df}{1+f\bar{f}}\Big).
\end{equation}
Similarly, away from zeros of $f$,
\begin{equation}\label{nozeros}
\frac{d\bar{f}\wedge df}{(1+f\bar{f})^2}=-d\Big(\frac{df}{f(1+f\bar{f})}\Big).
\end{equation}
However the spherical area form cannot be globally exact because
\begin{equation}\label{intm}
\int_M\frac{d\bar{f}\wedge df}{(1+f\bar{f})^2}= m\ne 0.
\end{equation}
Here $m$ is the degree of $f$ (the number of zeros or poles), and we have used that the unit sphere has total area one.

Starting from the right member of (\ref{nozeros}) and computing the exterior derivative there, noting that $1/(1+f\bar{f})$
is always a smooth function and taking, in remaining factors, distributional contributions carefully into account and using
that $df\wedge df=0$, gives
$$
-d\Big(\frac{1}{1+f\bar{f}}\cdot \frac{df}{f}\Big)=\frac{d(f\bar{f})}{(1+f\bar{f})^2}\wedge \frac{df}{f} -\frac{1}{1+f\bar{f}}\cdot d\Big(\frac{df}{f}\Big)
$$
$$
=\frac{d\bar{f}\wedge df}{(1+f\bar{f})^2} -\frac{1}{1+f\bar{f}}\cdot d\Big(\frac{df}{f}\Big).
$$
Thus, rearranging terms,
\begin{equation}\label{zerosallowed1}
\frac{d\bar{f}\wedge d{f}}{(1+f\bar{f})^2}=-d\Big(\frac{df}{f(1+f\bar{f})}\Big)+\frac{1}{1+f\bar{f}}\cdot d\big(\frac{df}{f}\big).
\end{equation}
In a similar manner,
\begin{equation}\label{polesallowed1}
\frac{d\bar{f}\wedge d{f}}{(1+f\bar{f})^2}=d\Big(\frac{\bar{f}df}{1+f\bar{f}}\Big)-\frac{f\bar{f}}{1+f\bar{f}}\cdot d\big(\frac{df}{f}\big).
\end{equation}

In the above expressions the final terms consist of pure distributional contributions (Dirac currents):
\begin{equation}\label{ddff}
d\big(\frac{df}{f}\big)=\pi  \big(\sum_{k=1}^m \delta_{\omega_k}-\sum_{j=1}^m \delta_{\xi_j}\big)d\bar{\zeta}\wedge d\zeta,
\end{equation}
The right member here is the $2$-form current corresponding to the divisor of $f$,  $\omega_k$ denoting the zeros and $\xi_j$
the poles of $f$, both repeated according to multiplicities. 
Localized at a zero of $f$, for example, the identity expresses that $1/\pi z$ is a fundamental solution for the Cauchy-Riemann operator:
$$
\frac{\partial}{\partial \bar{z}} \big(\frac{1}{\pi z}\big)= \delta_0.
$$
Written in an invariant way the latter relation becomes
$$
d\big(\frac{dz}{z}\big)=\pi  \,\delta_0 \,d\bar{z}\wedge dz=2\pi\I \,\delta_0 \,dx\wedge dy,
$$
indicating also that Dirac measures shall be considered as $2$-form currents.
Compare Lemma~1 in \cite{Gustafsson-Tkachev-2009}.

Inserting (\ref{ddff}) into (\ref{zerosallowed1}) and (\ref{polesallowed1}), taking into account that the factor in front of 
$d\big(df/f\big)$ kills in each case half of the Dirac currents, gives the following formulas.

\begin{proposition}\label{prop:dfdf} 
Let $f$ be a meromorphic function on $M$ of degree $m$, 
let $\omega_k$ be its zeros and $\xi_j$ its poles, both repeated according to multiplicities.
Then, in the sense of currents,
\begin{equation}\label{addingzeros}
\frac{d\bar{f}\wedge df}{(1+f\bar{f})^2}
=-d\Big(\frac{df}{f(1+f\bar{f})}\Big)
+\pi \sum_{k=1}^m \delta_{\omega_k}d\bar{\zeta}\wedge d\zeta
\end{equation}
\begin{equation}\label{addingpoles}
\qquad\,=d\Big(\frac{\bar{f}df}{1+f\bar{f}}\Big)
+\pi \sum_{j=1}^m \delta_{\xi_j}d\bar{\zeta}\wedge d\zeta.
\end{equation}
\end{proposition}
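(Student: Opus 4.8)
The plan is to obtain both identities by simply feeding the distributional formula (\ref{ddff}) into the two exact expansions (\ref{zerosallowed1}) and (\ref{polesallowed1}) that have already been derived, since the left-hand sides of the proposition are verbatim the left-hand sides there. The entire content of the proposition then reduces to the observation, flagged in the sentence preceding the statement, that in each case the smooth scalar factor standing in front of $d(df/f)$ annihilates exactly half of the Dirac currents carried by the divisor of $f$.

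First I would verify that the two coefficient functions, $\frac{1}{1+f\bar{f}}$ and $\frac{f\bar{f}}{1+f\bar{f}}$, are not merely smooth away from the divisor but extend to \emph{globally} smooth functions on all of $M$. Indeed $\frac{1}{1+|w|^2}$ and $\frac{|w|^2}{1+|w|^2}$ are smooth functions on the whole Riemann sphere $\C\P^1$, with well-defined limits $0$ and $1$ respectively as $w\to\infty$, and the two coefficients above are precisely their pullbacks under $f:M\to\C\P^1$. Granting this, the products with the point currents $\delta_{\omega_k}$ and $\delta_{\xi_j}$ are legitimate, with $\phi\,\delta_p=\phi(p)\,\delta_p$ for smooth $\phi$. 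I would then evaluate at the divisor: at a zero $\omega_k$ one has $f\bar{f}=0$, so $\frac{1}{1+f\bar{f}}=1$ and $\frac{f\bar{f}}{1+f\bar{f}}=0$, while at a pole $\xi_j$ one has $f=\infty$, so $\frac{1}{1+f\bar{f}}=0$ and $\frac{f\bar{f}}{1+f\bar{f}}=1$. Substituting (\ref{ddff}) into (\ref{zerosallowed1}) thus leaves only the zero terms, each with weight $\pi$, which is (\ref{addingzeros}); substituting into (\ref{polesallowed1}) leaves only the pole terms, where the explicit minus sign in front of $\frac{f\bar{f}}{1+f\bar{f}}$ combines with the minus sign carried by the poles in the divisor current to produce the required $+\pi\sum_j\delta_{\xi_j}$, giving (\ref{addingpoles}).

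The one step I expect to require genuine care, rather than routine calculation, is the legitimacy of $\phi\,\delta_p=\phi(p)\,\delta_p$ at the \emph{poles}, where the factor $\frac{1}{1+f\bar{f}}$ superficially appears singular because $f$ blows up there. The resolution is exactly the pullback argument above: since $\frac{1}{1+|w|^2}$ is a genuinely smooth function on the compactified target sphere with the definite finite value $0$ at $w=\infty$, the apparent singularity of $\frac{1}{1+f\bar{f}}$ at a pole of $f$ is removable and the multiplication with $\delta_{\xi_j}$ is unambiguous; the symmetric remark applies to $\frac{f\bar{f}}{1+f\bar{f}}$ at the zeros. Once this smoothness is secured, no further computation is needed and both displayed identities follow by inspecting which coefficient vanishes on which half of the divisor.
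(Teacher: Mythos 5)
Your proposal is correct and follows exactly the paper's own route: the paper proves the proposition by inserting (\ref{ddff}) into (\ref{zerosallowed1}) and (\ref{polesallowed1}) and noting that the smooth factor in front of $d\big(\frac{df}{f}\big)$ kills half of the Dirac currents in each case. Your additional justification---that $\frac{1}{1+f\bar f}$ and $\frac{f\bar f}{1+f\bar f}$ are globally smooth as pullbacks under $f:M\to\C\P^1$ of smooth functions on the sphere, so multiplication against the point currents is legitimate even at the poles---is a careful spelling-out of what the paper leaves implicit, not a different argument.
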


\begin{remark}
The presence of the last terms in (\ref{addingzeros}) and (\ref{addingpoles}) was slightly slipped over in \cite{Gustafsson-Tkachev-2011}.
One the other hand, those same terms but with the opposite sign, usually appear also in the first terms 
and therefore eventually cancel. See Theorem~\ref{thm:qi} below.
\end{remark}

\begin{remark}\label{rem:zeros-poles}
The zeros and poles of $f$ are not really important in themselves, they can in the distributional contribution in 
(\ref{addingzeros}), (\ref{addingpoles}) be replaced by any other antipodal pair of values for $f$. 
For example, for any $c\in\C\P^1$ the poles can be replaced by those points $\eta_j$  for which 
$f(\eta_j)=c$. Writing $c=[\,\bar{b}:\bar{a}\,]$ in projective coordinates, normalized as in (\ref{g}),
the $\eta_j$ are the poles of the function $g$ in (\ref{g}), and in terms of $g$ we have
$$
\frac{d\bar{f}\wedge df}{(1+f\bar{f})^2}=d\Big(\frac{\bar{g}dg}{1+g\bar{g}}\Big)
+\pi \sum_{j=1}^m \delta_{\eta_j}d\bar{\zeta}\wedge d\zeta.
$$  
\end{remark}

\begin{remark}
Instead of treating poles within the framework of distributions or currents one may
cut small holes around singularities and then get additional boundary integrals, like in the derivation of (\ref{intres1}).
\end{remark}

Integrating over $M_+$ in Proposition~\ref{prop:dfdf}, using Stokes' theorem together with $\bar{f}=f^*$ on $\partial M_+$, gives the 
following versions of the quadrature identity (\ref{qisf}).

\begin{theorem}\label{thm:qi}
With notations and assumptions as in Proposition~\ref{prop:dfdf} we have the following quadrature identity for functions $h$
which are holomorphic in $M_+$ and integrable with respect to the pulled back, by $f$, spherical measure on 
the Riemann sphere:
\begin{equation}\label{sfqi1}
\frac{1}{2\pi \I} \int_{M_+} h\,\frac{d\bar{f}\wedge d{f}}{(1+|f|^2)^2} 
=-\sum_{M_+} {\rm Res\,} \frac{h  df}{f(1+ff^*)} +\sum_{\omega_k\in M_+} h(\omega_k)\qquad
\end{equation}
\begin{equation}\label{sfqi11}
\qquad\qquad
=+\sum_{ M_+} {\rm Res\,} \frac{h f^* df}{1+ff^*} +\sum_{\xi_j\in M_+} h(\xi_j).
\end{equation}
It is assumed, in accordance with Remark~\ref{rem:zeros-poles}, that
the sphere has been rotated so that zeros and poles of $f$ on $\partial M_+$ are avoided.

The first sums in the right members above are taken over all singular points in $M_+$ of the differentials indicated.
The second sums (with explicit zeros and poles) are generically contained, but with opposite signs, also in the first
sums and therefore usually cancel with terms from these. 

The last statement can be made precise 
under the assumption that $f^*$ does not vanish at the poles of $f$ and vice versa. In that case
it is enough to sum over those points $\zeta\in M_+$ for which $1+f(\zeta)f^*(\zeta)=0$, and the complete identities then become
\begin{equation}\label{sfqi2} 
\frac{1}{2\pi \I} \int_{M_+} h\,\frac{d\bar{f}\wedge d{f}}{(1+|f|^2)^2} 
=-\sum_{1+ff^*=0} {\rm Res\,} \Big(\frac{h  }{1+ff^*}\cdot\frac{df}{f}\Big)\qquad
\end{equation}
\begin{equation}\label{sfqi3}
\qquad\qquad\qquad\qquad\,\,  =+\sum_{1+ff^*=0} {\rm Res\,} \Big(\frac{h  }{1+ff^*}\cdot{f^*df}\Big).
\end{equation}
\end{theorem}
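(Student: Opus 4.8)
The plan is to integrate the two current identities of Proposition~\ref{prop:dfdf} against the test function $h$ over $M_+$, turning the exact terms into boundary integrals via Stokes' theorem and these in turn into residues. The step that makes everything go through cleanly is the holomorphy of $h$: writing $\alpha=\frac{df}{f(1+f\bar f)}$, this $1$-form is a (non-holomorphic) function times $df$, so in a local holomorphic coordinate both $dh$ and $\alpha$ are proportional to $d\zeta$ and the cross term $dh\wedge\alpha$ vanishes. Hence $h\,d\alpha=d(h\alpha)$, and multiplying (\ref{addingzeros}) by $h$ gives
\[
h\,\frac{d\bar f\wedge df}{(1+f\bar f)^2}=-d\Big(\frac{h\,df}{f(1+f\bar f)}\Big)+\pi h\sum_{k=1}^m\delta_{\omega_k}\,d\bar\zeta\wedge d\zeta,
\]
with the analogous identity from (\ref{addingpoles}) involving $\frac{h\bar f\,df}{1+f\bar f}$ and the poles $\xi_j$.

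First I would integrate over $M_+$. The Dirac terms are evaluated with the normalization implicit in (\ref{ddff}), namely $\int_{M_+}\delta_\omega\,d\bar\zeta\wedge d\zeta=2\I$ for $\omega\in M_+$ and $0$ otherwise, so that after division by $2\pi\I$ each interior zero contributes exactly $h(\omega_k)$ and each interior pole $h(\xi_j)$; these become the explicit second sums in (\ref{sfqi1})--(\ref{sfqi11}). Next, Stokes' theorem converts $-\int_{M_+}d(h\alpha)$ into $-\int_{\partial M_+}h\alpha$ over $\partial M_+=\Gamma_M$. On $\Gamma_M$, fixed pointwise by $J$, we have $\bar f=f^*$, so the boundary restriction of $h\alpha$ agrees with that of the genuinely meromorphic differential $\frac{h\,df}{f(1+ff^*)}$ on $M_+$. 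Having rotated the sphere, as prescribed just before the theorem, so that $f$ has no zeros or poles on $\Gamma_M$, the residue theorem gives $-\frac{1}{2\pi\I}\int_{\partial M_+}=-\sum_{M_+}\mathrm{Res}$, which is (\ref{sfqi1}); the pole identity yields (\ref{sfqi11}) by the same route.

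To reach the reduced forms (\ref{sfqi2})--(\ref{sfqi3}) I would locate the poles of the two meromorphic differentials. Under the stated hypothesis (no zero of $f$ is the $J$-image of a pole, and conversely) the factor $1+ff^*$ equals $1$ at every zero of $f$ and is infinite at every pole, so a short local computation shows that $\frac{h\,df}{f(1+ff^*)}$ is regular at the poles of $f$ and has at each zero a residue cancelling the corresponding term of the explicit sum $\sum h(\omega_k)$, while $\frac{hf^*\,df}{1+ff^*}$ is regular at the zeros and has at each pole a residue cancelling the corresponding term of $\sum h(\xi_j)$. After these cancellations only the residues at the points where $1+ff^*=0$ remain, which are the identities (\ref{sfqi2}) and (\ref{sfqi3}).

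The delicate points, as the text already signals, lie in the distributional bookkeeping and the accompanying constants rather than in any deep idea. Getting the normalization $\int\delta\,d\bar\zeta\wedge d\zeta=2\I$ correct, fixing the orientation of $\partial M_+$ consistently with the residue theorem, and checking that $dh\wedge\alpha$ really vanishes are where sign and factor errors would enter; indeed the Dirac terms are precisely what was overlooked in \cite{Gustafsson-Tkachev-2011}. The subtlest conceptual move is the boundary transition from the non-meromorphic integrand $h\alpha$ (which still contains $\bar f$) to the meromorphic form whose residues are summed: this is valid only because $\bar f=f^*$ on $\Gamma_M$ and because no pole of the form lies on $\Gamma_M$, which is exactly why avoiding zeros and poles of $f$ on the symmetry line, together with the non-coincidence hypothesis, are needed.
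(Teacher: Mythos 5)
Your proposal is correct and follows essentially the same route as the paper: multiply the current identities of Proposition~\ref{prop:dfdf} by $h$ (legitimate since $dh\wedge\alpha=0$, both forms being of type $(1,0)$), integrate over $M_+$, apply Stokes' theorem together with $\bar{f}=f^*$ on $\Gamma_M$ and the residue theorem, and read off the explicit zero/pole sums from the Dirac terms with exactly the normalization you state. Your reduction to (\ref{sfqi2})--(\ref{sfqi3}) is a condensed but accurate version of the paper's appendix (Section~\ref{sec:appendix}), which performs the same local residue analysis exhaustively in terms of leading exponents of $f$, $f'$, $f^*$, $h$; the only detail you elide is the paper's remark that Stokes' theorem for merely integrable $h$ is justified via Ahlfors--Bers mollifiers.
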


\begin{proof}
The formulas (\ref{sfqi1}), (\ref{sfqi11}) follow immediately on using Stokes' theorem together with 
Proposition~\ref{prop:dfdf}. Note that the factor $h$ can be multiplied
to (\ref{addingpoles}) and then goes inside the differential in the first term of the right member. 
For convenience we repeat the computation for (\ref{sfqi1}) in one sequence of equalities:
$$
\int_{M_+}h\frac{d\bar{f}\wedge d{f}}{(1+f\bar{f})^2}
=-\int_{M_+}d\Big(\frac{h\,df}{f(1+f\bar{f})}\Big)+\int_{M_+}\frac{h}{1+f\bar{f}}\cdot d\big(\frac{df}{f}\big)
$$
$$
=-\int_{\partial M_+}\frac{h}{1+f\bar{f}}\cdot\frac{df}{f}+\int_{M_+}\frac{h}{1+f\bar{f}}\cdot d\big(\frac{df}{f}\big)
$$
$$
=-\int_{\partial M_+}\frac{h}{1+f{f^*}}\cdot\frac{df}{f}+\int_{M_+}\frac{h}{1+f\bar{f}}\cdot d\big(\frac{df}{f}\big)
$$
$$
=-2\pi \I \,\sum_{M_+}{\rm Res\,}\Big(\frac{h}{1+f{f^*}}\frac{df}{f}\Big)+\int_{M_+}\frac{h}{1+f\bar{f}}\cdot d\big(\frac{df}{f}\big)
$$
$$
=-2\pi \I \sum_{M_+} {\rm Res\,} \frac{h  df}{f(1+ff^*)} +2\pi \I\sum_{\omega_k\in M_+} h(\omega_k).
$$
The use of Stokes' theorem can be  justified by using Ahlfors-Bers mollifiers \cite{Bers-1964} in a way which was introduced 
in \cite{Sakai-1982} and which is now a standard tool in the theory of quadrature domains. 

The  proof of (\ref{sfqi2}), (\ref{sfqi3}) under the stated assumption, i.e. confirmation of the presence of cancellations, 
consists of a lengthy but straight-forward local analysis of the differentials
\begin{equation}\label{differentials}
\frac{hdf}{f(1+ff^*)}, \quad \frac{hf^*df}{1+ff^*}
\end{equation}
near an arbitrary point. In order not to interrupt the main lines of presentation we defer these details to an appendix,
see Section~\ref{sec:appendix}.

\end{proof}

\begin{remark}\label{rem:varepsilon}
One may scale the spherical area measure with a parameter $\varepsilon>0$ to approach Euclidean area measure in the limit 
$\varepsilon\to 0$. Using that
$$
d\Big(\frac{ f\bar{f}}{1+\varepsilon^2  f\bar{f}}\cdot\frac{df}{f}\Big)
=\frac{d\bar{f}\wedge df}{(1+\varepsilon^2 f\bar{f})^2}+\frac{ f\bar{f}}{1+\varepsilon^2 f\bar{f}}\cdot d\Big(\frac{df}{f}\Big),
$$
which is a generalization of (\ref{polesallowed1}), one obtains the quadrature identity
$$
\frac{1}{2\pi \I} \int_{M_+} h\,\frac{d\bar{f}\wedge d{f}}{(1+\varepsilon^2 |f|^2)^2} 
=\sum_{M_+} {\rm Res\,} \frac{hf^*df  }{1+\varepsilon^2 ff^*}+\sum_{\zeta_j\in M_+}\frac{1}{\varepsilon^2}h(\xi_j).
$$
Like in the statements of the theorem, the contributions from the poles $\xi_j$ cancel under natural assumptions.
In the limit $\varepsilon\to 0$ the identity takes the traditional form
$$
\frac{1}{2\pi \I} \int_{M_+} h\,{d\bar{f}\wedge d{f}}
=\sum_{M_+} {\rm Res\,} {hf^*df  }.
$$

There may still be contributions from poles of $f$, but these come from solutions $\zeta\in M_+$ of $1+\varepsilon^2 f(\zeta)f^*(\zeta)=0$
which have become poles of $f$ in the limit $\varepsilon\to 0$. On the other hand, poles of $f$ make fewer functions $h$ be allowed in the left member.
For example, a pole of order one for $f$ requires $h$ to have a zero of order two at the same point (unless compensation comes from $f^*$),
and therefore contributions from poles of $f$ will usually not show up in the formula. This is what happens for null quadrature domains.
\end{remark}


\subsection{A simple example}

Just to illustrate the general concepts we take a very simple example.
Let $M=\C\cup\{\infty\}$, $J(\zeta)= \bar{\zeta}$,  $M_+=\{\zeta\in\C: \im \zeta >0\}$, $f(\zeta)=\zeta^n$
with $n>0$ an integer. Then $f^*(\zeta)=\zeta^n$ and
using (\ref{sfqi2}) we obtain, for $h$ analytic and integrable in $M_+$,
$$
\frac{1}{2\pi \I}\int_{M_+}h\,\frac{d\bar{f}\wedge df}{(1+|f|^2)^2}=-\sum_{M_+}\,\res_{1+ff^*=0}\,\Big(\frac{h}{1+ff^*}\cdot \frac{df}{f}\Big)
$$
$$
=-\sum_{\im\zeta>0}\res_{\zeta=\sqrt[2n]{-1}}\, \Big(\frac{ h(\zeta)}{1+\zeta^{2n}}\cdot \frac{n d\zeta}{\zeta}\Big)
$$
$$
=\frac{1}{2}\big(h(e^{\pi\I/2n})+h(e^{3\pi\I/2n})+\dots+h(e^{{(2n-1)}\pi\I/2n})\big).
$$

When $n=1$ or $n=2$, $f$ is univalent on $M_+$ and  the result can be written directly in the image domains $\Omega_1=\{\im z>0\}$ 
and $\Omega_2=\C\setminus [0,+\infty)$ as follows:
\begin{equation}\label{simple1}
\frac{1}{2\pi \I}\int_{\Omega_1}h(z)\,\frac{d\bar{z}\wedge dz}{(1+|z|^2)^2}=\frac{1}{2}h(\I),
\end{equation}
$$
\frac{1}{2\pi \I}\int_{\Omega_2}h(z)\,\frac{d\bar{z}\wedge dz}{(1+|z|^2)^2}=\frac{1}{2}\big(h(\I)+h(-(\I)\big).
$$
Here $h(z)$ is considered as a function of $z=f(\zeta)$. In the second formula, $\Omega_2$
can be replaced by $\C\setminus I$ where $I$ is an arbitrary closed subset of $\R$, and it then
actually follows from the first formula together with the corresponding formula for the reflected domain
$J(\Omega_1)$.

One may notice that $\partial\Omega_2$, and $\partial(\C\setminus I)$ in general, is not a full algebraic curve.
As remarked in \cite{Gustafsson-1983} (Lemma~1 and discussions after it) this is an exceptional situation showing up
in the present case because $f^*=f$, having as a consequence that the pair $f$ and $f^*$ does not generate the field of 
meromorphic functions on $M$
when $n\geq 2$.

With Euclidean area measure, in place of spherical, both $\Omega_1$ and $\Omega_2$ become null quadrature domains.


\section{The ellipse}\label{sec:ellipses}

\subsection{The disk}\label{sec:disk}

The most basic quadrature domain, the unit disk $\D=\{|z|<1\}$, is bounded by the real algebraic curve
\begin{equation}\label{circle}
\Gamma: \quad x^2+y^2=1.
\end{equation}
As a complex algebraic curve in $\C\P^2$ it can be identified with the Riemann sphere.

The Schottky double of $\D$ is also a model of the Riemann sphere, thought of
as $\C\cup\{\infty\}$ with anti-conformal involution $J(z)=1/\bar{z}$, which has $\Gamma$ as its fixed point set. 
The exterior of $\Gamma$ is (as for the conformal structure) just another copy of $\D$, and can be identified 
with the backside in the Schottky double. The fact that the two mentioned spheres are canonically identical, as Riemann surfaces with 
an anti-conformal involution, is crucial for the property of $\D$ being a quadrature domain. This is a general statement
applicable to all classical quadrature domains as in (\ref{qi0}). The quadrature identities for the disk and its exterior $\D^e$ are immediate: 
$$
\frac{1}{2\pi \I}\int_{\D}h(z)\,\frac{d\bar{z}\wedge dz}{(1+|z|^2)^2}=\frac{1}{2}h(0),
$$
$$
\frac{1}{2\pi \I}\int_{\D^e}h(z)\,\frac{d\bar{z}\wedge dz}{(1+|z|^2)^2}=\frac{1}{2}h(\infty).
$$
These identities can be identified with (\ref{simple1}) after suitable rotations of the sphere, as in (\ref{Mobius}).

The curve $\Gamma$ can be naturally viewed also as a curve in the real projective space $\R\P^2_{\rm red}$, which topologically
can be identified with the sphere $S^2$ with antipodes identified. What is outside the curve
then becomes topologically a M\"obius strip, hence a non-orientable surface. This is the generic situation in real algebraic geometry:
a simple closed algebraic curve, an ``oval'', which divides the space into a topological disk and a M\"obius strip. Hilbert's sixteenth problem
concerns the possible scenarios as for ovals inside each other in the case of real algebraic curves having several components, 
see {\cite{Shafarevich-1977, Wilson-1978, Gross-Harris-1981, Harris-1992}.


\subsection{The true ellipse}\label{sec:ellipse}

The standard ellipse in the plane is, with $a>b>0$,
\begin{equation}\label{ellipse}
\Gamma:\quad \frac{x^2}{a^2}+\frac{y^2}{b^2}=1,
\end{equation}
and it bounds the domain 
$$
E=\{(x,y)\in \R^2: \frac{x^2}{a^2}+\frac{y^2}{b^2}<1\}.
$$
In terms of the complex coordinate the equation (\ref{ellipse}) becomes
\begin{equation}\label{ellipse1}
z^2+\bar{z}^2-\frac{2(a^2+b^2)}{a^2-b^2}z\bar{z}+\frac{4a^2b^2}{a^2-b^2}=0,
\end{equation}
and solving for $\bar{z}$ gives a multivalued function with two branches as
$$
S_\pm (z)=\frac{a^2+b^2}{c^2}\,z \pm \frac{2ab}{c^2}\sqrt{z^2-c^2}.
$$
Here $\pm c$ are the foci, with
$$
c=\sqrt{a^2-b^2}>0,
$$

The branch $S_-(z)$ corresponding to the minus sign (taking the square root to be positive for large positive
values of $z$) equals $\bar{z}$ on $\Gamma$  and is single-valued in the region outside the ellipse,
and also inside $\Gamma$ up to the slit $[-c,c]$. Hence this branch is the actual Schwarz function of the curve $\Gamma$.
The other branch, $S_+(z)$, is also single-valued in $\C\setminus [-c,c]$, and it analytically continues through the slit, 
to join $S_-(z)$ on the other side, compare Figure~\ref{fig:ellipse}. However it does not agree with $\bar{z}$ on $\Gamma$.
As $z\to\infty$ both branches tend to infinity linearly. 

It follows that $S(z)$ is an algebraic function which becomes single-valued on a two-sheeted Riemann surface $M$ over
$\C\cup\{\infty\}$ with branch points over $\pm c$, and with each branch having a simple pole over $z=\infty$. Moreover, it uniformizes the 
algebraic equation $P(z,w)=0$, where
\begin{equation}\label{Pzw}
P(z,w)=z^2+w^2-\frac{2(a^2+b^2)}{a^2-b^2}zw+\frac{4a^2b^2}{a^2-b^2},
\end{equation}
in the sense that $P(z,S(z))=0$ holds identically.
 
The Riemann surface $M$ is compact, has genus zero and has no singular points.
Therefore $M$ can be directly identified with the locus  $P(z,w)=0$,  provided one compactifies the latter.
This is to be done in two dimensional projective space, i.e. one introduces the homogenization 
$P(t,z,w)=t^2P(z/t,w/t)$ of $P$ and defines
$$
{\rm loc\,}P=\{[\,t:z:w\,]\in\C\P^2: P(t,z,w)=0\}.
$$

Having genus zero, $M$ can also be identified with the Riemann sphere, and so we have several different models:
$$
{\rm loc\,}P\cong M\cong S^2\cong\C\P^1\cong\C\cup\{\infty\}.
$$
On ${\rm loc\,}P$ (or the finite part of it) the coordinate functions $z$ and $w$ appearing in (\ref{Pzw})
turn into meromorphic functions $f$ and $f^*$ on defined on $M$ and satisfying $P(f,f^*)=0$. 
Here $M$ is basically an abstract Riemann surface, but choosing the model
$\C\cup\{\infty\}=\D\cup\partial\D\cup\D^e$
for $M=M_+\cup \Gamma \cup M_-$, one choice is to take $f$ to be the Joukowski map 
\begin{equation}\label{fellipse}
f(\zeta)=\frac{1}{2}\big((a-b)\zeta+(a+b)\zeta^{-1}\big).
\end{equation}
This function is univalent inside the unit disk $\D$ and maps it onto the exterior $\Omega=E^e$ of the ellipse. 
The holomorphically reflected function, defined in terms of $f$ by $f^*=\overline{f\circ J}$ where $J(\zeta)=1/\bar{\zeta}$, is
\begin{equation}\label{fstarellipse}
f^*(\zeta)=\frac{1}{2}\big((a+b)\zeta+(a-b)\zeta^{-1}\big).
\end{equation} 
It is easy to interpret (\ref{fellipse}) on the boundary: on taking $\zeta=e^{\I \varphi}\in\partial\D$ it gives the parametrization
$f(e^{\I \varphi})=a\cos \varphi -\I b \sin\varphi$
of $\Gamma$. The minus sign appears because decreasing $\varphi$ is to correspond to positive orientation of $\partial\Omega$.


As mentioned, the function $f(\zeta)$ is univalent in the unit disk, and it remains so up to the radius $R=\sqrt{\frac{a+b}{a-b}}=\frac{a+b}{c}>1$.
The annulus $1<|\zeta|<R$ is mapped onto $E\setminus [-c,c]$, and $f$ is in addition
univalent in $R<|\zeta|<\infty$, mapping that region onto $\C\setminus [-c,c]$. 
Thus $E\setminus [-c,c]$ is covered twice, and the circle $|\zeta|=R$ is mapped onto the focal segment $[-c,c]$,
which is also a branch cut for the inverse map.  On this circle $|\zeta|=R$ we have
$$
f(\pm\I\frac{a+b}{c})=0, \quad f(\pm\frac{a+b}{c})=\pm c, \quad f^\prime(\pm\frac{a+b}{c})=0.
$$
We notice that, in the notations of (\ref{ddff}), the zeros $\omega_j$ and poles $\xi_j$ of $f$ are given by
$$
\omega_j=\pm\I\frac{a+ b}{c}, \quad \xi_j = 
\begin{cases}
0\\
\infty
\end{cases}
\quad(j=1,2).
$$
As for the mapping properties we can write, symbolically,
$$
f(\D)=1\cdot E^e=\Omega,
$$
$$
f(\D^e)= 2\cdot E+ 1\cdot E^e,
$$
see Figure~\ref{fig:ellipse}.


\begin{figure}
\begin{center}
\includegraphics[width=\textwidth, scale=1, trim= 120 400 10 200]{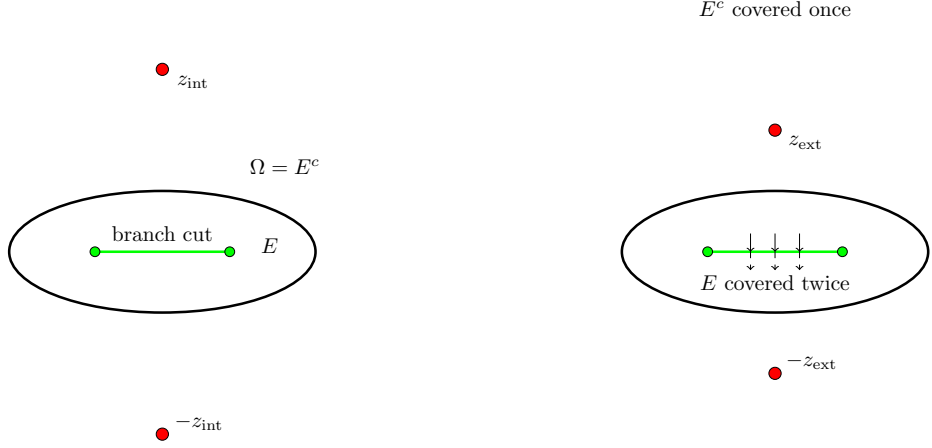}
\end{center}
\caption{{\it Left:} the exterior $\Omega$ of an ellipse $E$ as a two point quadrature domain. {\it Right:} similarly for the partly two-sheeted domain
$2\cdot E+ 1\cdot E^c$. Transition between the sheets via branch cut.}
\label{fig:ellipse}
\end{figure}


\subsection{Quadrature data}

We proceed to identify the quadrature identities (\ref{qisf}), or (\ref{sfqi1}), (\ref{sfqi2}), associated with the ellipse. 
With $f$ as in (\ref{fellipse}) and using directly (\ref{sfqi2}), we obtain
\begin{equation}\label{qisfellipse}
\frac{1}{2\pi \I} \int_{\D} h\,\frac{d\bar{f}\wedge d{f}}{(1+|f|)^2} =-\sum_{1+ff^*=0} {\rm Res\,} \Big(\frac{h  }{1+ff^*}\cdot\frac{df}{f}\Big).
\end{equation}
By (\ref{fellipse}), (\ref{fstarellipse}) we have
$$
1+f(\zeta)f^*(\zeta)=\frac{1}{4\zeta^2}\big(c^2 \zeta^4+(4+2(a^2+b^2))\zeta^2 +c^2\big), 
$$
hence the equation $1+ff^*=0$ for the quadrature nodes becomes
$$
c^4 \zeta^4+(4+2(a^2+b^2))c^2\zeta^2 +c^4=0.
$$
Solving first for $c^2\zeta^2$ gives two solutions
\begin{equation}\label{c2ellipse}
(c^2\zeta^2)_\pm=- ({2+a^2+b^2)\pm2\sqrt{(1+a^2)(1+b^2)}},
\end{equation}
which are negative real numbers satisfying
\begin{equation}\label{c4}
(c^2\zeta^2)_+\cdot(c^2\zeta^2)_-=c^4, 
\end{equation}
$$
(c^2\zeta^2)_+ +(c^2\zeta^2)_-=-4-2(a^2+b^2). 
$$
The signs $\pm$ correspond to the signs of the square root in (\ref{c2ellipse}).  

Taking next an exterior square root gives four purely imaginary solutions,
$\pm\zeta_{\rm ext}$ and $\pm\zeta_{\rm int}$, for $\zeta$ itself. They are 
\begin{equation}\label{zetapmext}
\pm\zeta_{\rm ext}=\pm\frac{\I}{c} \sqrt{2+a^2+b^2+2\sqrt{(1+a^2)(1+b^2)}}\in \D^e,
\end{equation}
\begin{equation}\label{zetapm}
\pm\zeta_{\rm int}=\pm\frac{\I}{c} \sqrt{2+a^2+b^2-2\sqrt{(1+a^2)(1+b^2)}}\in \D,
\end{equation}
with $\zeta_{\rm int}\cdot\zeta_{\rm ext}=\pm 1$ by (\ref{c4}).
The right member of (\ref{qisfellipse}) becomes
$$
-\sum_{1+ff^*=0} {\rm Res\,} \Big(\frac{h  }{1+ff^*}\cdot\frac{df}{f}\Big)
=-\sum_{1+ff^*=0} \frac{h  }{(1+ff^*)^\prime}\cdot\frac{f^\prime}{f}
$$
\begin{equation}\label{qipreliminary}
=-\sum_{\zeta=\pm \zeta_{\rm int}}\frac{h(\zeta)}{c^2 \zeta^2 +2+a^2+b^2}
\cdot \frac{c^2\zeta^2-(a+b)^2}{c^2\zeta^2+(a+b)^2},
\end{equation}
which in principle gives
\begin{equation}\label{qiellipse}
\frac{1}{2\pi \I} \int_{\D} h\,\frac{df\wedge d\bar{f}}{(1+|f|)^2} =A_{\rm int}\big(h(\zeta_{\rm int})+h(-\zeta_{\rm int})\big)
\end{equation}
with $\zeta_{\rm int}$ as in (\ref{zetapm}). Similarly, using the exterior nodes (\ref{zetapmext}),
$$
\frac{1}{2\pi \I} \int_{\D^e} h\,\frac{df\wedge d\bar{f}}{(1+|f|)^2} =A_{\rm ext}\big(h(\zeta_{\rm ext})+h(-\zeta_{\rm ext})\big).
$$

The coefficients $A_{\rm int}$ and $A_{\rm int}$ are obtained by inserting (\ref{zetapmext}), (\ref{zetapm})
(or more directly (\ref{c2ellipse})) in (\ref{qipreliminary}). Alternatively, (\ref{intres}) or (\ref{intres1})  can be used. 
The result is, after simplifications,
\begin{equation}\label{Aint}A_{\rm int}=\frac{1}{2}\Big(1-\frac{ab}{\sqrt{(1+a^2)(1+b^2)}}\Big),
\end{equation}
\begin{equation}\label{Aext}
A_{\rm ext}=\frac{1}{2}\Big(1+\frac{ab}{\sqrt{(1+a^2)(1+b^2)}}\Big).
\end{equation}
In particular,
\begin{equation}\label{Aextint}
A_{\rm int}+A_{\rm ext}=1,
\end{equation}
which can be seen as a direct consequence of (\ref{intm}) with $m=2$.

The points $\pm\zeta_{\rm int}$ and $\pm\zeta_{\rm ext}$ have no particular geometric meaning themselves, 
it is rather the image points $\pm z_{\rm int}=f(\pm \zeta_{\rm int})$ and $\pm z_{\rm ext}=f(\pm \zeta_{\rm ext})$
which can be interpreted geometrically. They are most easily obtained by solving directly the equation $1+zS(z)=0$ 
(relevant in (\ref{intres1})), which corresponds to $1+f(\zeta)f^*(\zeta)=0$. That equation becomes
\begin{equation}\label{c4z}
c^4z^4+\big(2(a^2+b^2)+4a^2b^2\big)c^2z^2+c^4=0,
\end{equation}
and the solutions are
\begin{equation}\label{zint}
\pm z_{\rm int}=\pm\frac{\I}{c}\sqrt{2a^2b^2+a^2+b^2 + 2ab\sqrt{(1+a^2)(1+b^2)} },
\end{equation}
\begin{equation}\label{zext}
\pm z_{\rm ext}=\pm\frac{\I}{c}\sqrt{2a^2b^2+a^2+b^2 - 2ab\sqrt{(1+a^2)(1+b^2)} }.
\end{equation}
They have symmetries similar to those for $\zeta$, for example $z_{\rm int}\cdot z_{\rm ext}=\pm 1$,
and are located along the imaginary axis in the order 
$$
\im z_{\rm int}<-1<\im z_{\rm ext}<0<-\im z_{\rm ext}<1<-\im z_{\rm int}.
$$

In this physical $z$-plane the quadrature identity (\ref{qiellipse})  becomes
\begin{equation}\label{qiellipsephysical}
\frac{1}{2\pi \I} \int_{\Omega} h(z)\,\frac{d\bar{z}\wedge d{z}}{(1+|z|)^2} =A_{\rm int}\big(h(z_{\rm int})+h(-z_{\rm int})\big),
\end{equation}
in terms of the above data  and with $h$ considered as a function of $z=f(\zeta)$.
The identities, (\ref{qiellipse}) and (\ref{qiellipsephysical}), are
somewhat more precise versions of formulas appearing in \cite{Gustafsson-Tkachev-2011}. 


\section{The hyperbola}\label{sec:hyperbola}

\subsection{General}
 
We consider next a general hyperbola on the standard form
\begin{equation}\label{hyperbola0}
\Gamma:\quad\frac{x^2}{a^2}-\frac{y^2}{b^2}=1,
\end{equation}
obtained from the equation (\ref{ellipse}) for the ellipse by replacing $b$ with $\I b$. Thus most of the computations will be the same
as those for the ellipse, but the geometric considerations and certain other issues will be different, and for that reason we prefer to be 
fairly explicit with the details. In addition we consider the hyperbola to be the main new object of investigation for the present paper.
For the ellipse we assumed $a>b$, mainly because one usually visualizes the ellipse as oriented along the $x$-axis. 
For the hyperbola we assume only $a,b>0$. 

In complex coordinates the equation (\ref{hyperbola0}) becomes $P(z,\bar{z})=0$, where
\begin{equation}\label{hyperbola}
P(z,w)=z^2+{w}^2-\frac{2(a^2-b^2)}{a^2+b^2}zw-\frac{4a^2b^2}{a^2+b^2}.
\end{equation}
Solving for $\bar{z}$ gives the Schwarz function
\begin{equation}\label{Schwarzellipse}
S(z)=\frac{a^2-b^2}{c^2}\,z \pm \frac{2 ab}{c^2}\sqrt{c^2-z^2},
\end{equation}
where now $c=\sqrt{a^2+b^2}>0$. The two branch points $\pm c$ are located on the real axis, 
and branch cuts, not intersecting the curve, can be chosen to consist of the two segments $[c,+\infty)$, $(-\infty,-c]$, which meet
at the point of infinity of the Riemann sphere. 

The above means that $S(z)$ has two single-valued branches near the origin. That branch which satisfies $S(0)=2ab/c$
analytically continues to $\Gamma$ and satisfies $S(z)=\bar{z}$ on $\Gamma$. So this branch is the actual Schwarz function of $\Gamma$.
One might then expect that a null quadrature identity similar to that for the exterior of an ellipse, for example (\ref{nullqd}), 
would hold in that component of $\C\setminus \Gamma$ which is free from branch points.
But it does not, and one naturally wonders why. In order to produce an answer we need to make a fairly systematic study of quadrature properties for
hyperbola domains. 
We name the components of $\C\setminus \Gamma$ according to
$$
\Omega_0=\text{that component of } \C\setminus \Gamma \text{ which contains the point }z=0, \,\,\,\,
$$
$$
\Omega_+=\text{that component of } \C\setminus \Gamma \text{ which contains the point }z=+c,
$$
$$
\Omega_-=\text{that component of } \C\setminus \Gamma \text{ which contains the point }z=-c.
$$
Thus $S(z)$ has single-valued branches in $\Omega_0$, but not in $\Omega_\pm$.

When trying to complete the above components by adding points of infinity one really has to make 
a choice in which space to make the completion: in $\C\P^1$ or in $\R\P^2_{\rm red}$.
In the first case there is only one point of infinity, and after an inversion 
the hyperbola becomes a lemniscate and looks like the figure ``$\infty$''. Then  $\Omega_0$ is mapped to the exterior component,
and the $\Omega_\pm$ are mapped to the insides of the two small loops (in ``$\infty$''). See further Section~\ref{sec:lemniscate}.

In the second case the curve $\Gamma$ itself completes into the set of those $[\,t: z\,]\in \R\P^2_{\rm red}$  satisfying
\begin{equation}\label{hyperbola1}
z^2+\bar{z}^2-\frac{2(a^2-b^2)}{a^2+b^2}z\bar{z}-\frac{4a^2b^2}{a^2+b^2}t^2=0.
\end{equation} 
Choosing $t=1$ gives the finite part of the hyperbola, namely (\ref{hyperbola}),
while the additional points of infinity are of the form $[\,0:e^{\I \varphi}\,]$
with $\varphi\in \R$ satisfying
\begin{equation}\label{cosphi}
\cos 2\varphi=\frac{a^2-b^2}{a^2+b^2}.
\end{equation}
This equation defines the asymptotic directions of the hyperbola.
Note that the left member has period $\pi$, which means that the two directions of an asymptotic line
correspond to the same point of infinity in the limit. In particular this means that the two components
of $\Gamma$ do not intersect at the projective infinity, but are pieces of one and the same smooth curve
in $\R\P^2_{\rm red}$. If the component of $\Gamma$ which bounds $\Omega_+$ is oriented so that $\Omega_+$ lies to the
left, then this curve continues with the other component of $\Gamma$ oriented so that $\Omega_0$ lies to the left, see
Figure~\ref{fig:hyperbola2}. 

This orientation of $\Gamma$, in the direction of decreasing values of $y$, is consistent with the idea that $\Gamma$
is the boundary of the multi-sheeted domain
\begin{equation}\label{multipleOmega}
\Omega=2\cdot \Omega_+ +1\cdot \Omega_0,
\end{equation}
which may be considered as the image of $M_+$ under a suitable covering map $f:M\to \C\cup\{\infty\}$.
An explicit choice of $f$ can be obtained by simply changing $b$ to $\I b$ in the Joukowski map we had in the ellipse case,
see (\ref{fellipse}), (\ref{fstarellipse}) and subsequent equations.
This gives, for $\zeta\in \C\cup \{\infty\}$,
\begin{equation}\label{fhyperbola}
f(\zeta)=\frac{1}{2}\big((a-\I b)\zeta+(a+\I b)\zeta^{-1}\big), 
\end{equation}
with an accompanying function $f^*(\zeta)$ as in (\ref{fstarellipse}), namely
$$
f^*(\zeta)=\frac{1}{2}\big((a+\I b)\zeta+(a-\I b)\zeta^{-1}\big). 
$$
With $P(z,w)$ as in (\ref{hyperbola}) the equation $P(f(\zeta),f^*(\zeta))=0$ holds identically, exactly as in the ellipse case.

The function $f^*$ is still a holomorphic reflection of $f$, i.e. $f^*=\overline{f\circ J}$, however now with a different
involution, namely $J(\zeta)=\bar{\zeta}$.
This means that the fixed point set of $J$ is the real axis and the decomposition $M=M_+\cup \Gamma\cup M_-$
of the uniformizing Riemann surface becomes modeled by $\C\cup\{\infty\}=\U\cup (\R\cup\{\infty\}) \cup \L$.
Here $\U$ and $\L$ are the upper and lower half planes, respectively. Thus we take $M_+=\U=\{\zeta\in\C :\im \zeta>0\}$. 
The zeros and poles of $f$ are given by
$$
\omega_j=\pm\I\frac{a+\I b}{c}, \quad \xi_j = 
\begin{cases}
0\\
\infty
\end{cases}
\quad (j=1,2)
$$
with notations as in (\ref{ddff}). Thus $\omega_j\in \partial \D$,  $\xi_j\in\partial \U$.
The zeros $\zeta_\pm=\pm (a+\I b)/c$ of the derivative $f^\prime$ are mapped by $f$ onto the focal points $\pm c$,
which hence are branch points for $f$ considered as a covering map.

\begin{remark}
One may wonder why the involution changes when one replaces $b$ by $\I b$, i.e. when moving from ellipse to hyperbola.
With $P(z,w)$ as in (\ref{Pzw}) and $f$ and $f^*$ as in (\ref{fellipse}) and (\ref{fstarellipse}), the algebraic  identity $P(f,f^*)=0$
holds independent of what type of values $a$ and $b$ take. But when insisting on having the relationship $f^*=\overline{f\circ J}$
the meaning of $J$ has to change because of the action of complex conjugation. 

As a further step one may take both of $a$ and $b$ to be imaginary, i.e. to change them, in (\ref{ellipse}), (\ref{Pzw}), (\ref{fellipse}),
(\ref{fstarellipse}), to $\I a$ and $\I b$ respectively. Then $P(f,f^* )=0$ still holds, but now we can interpret $f^*=\overline{f\circ J}$ 
only with $J(\zeta)=-{1}/{\bar{\zeta}}$. This involution has no fixed points, as is consistent with the fact that the curve
(\ref{ellipse}) in this case has no real locus.
\end{remark}


\begin{figure}
\begin{center}
\includegraphics[width=\textwidth, scale=0.8, trim=150 300 0 200]{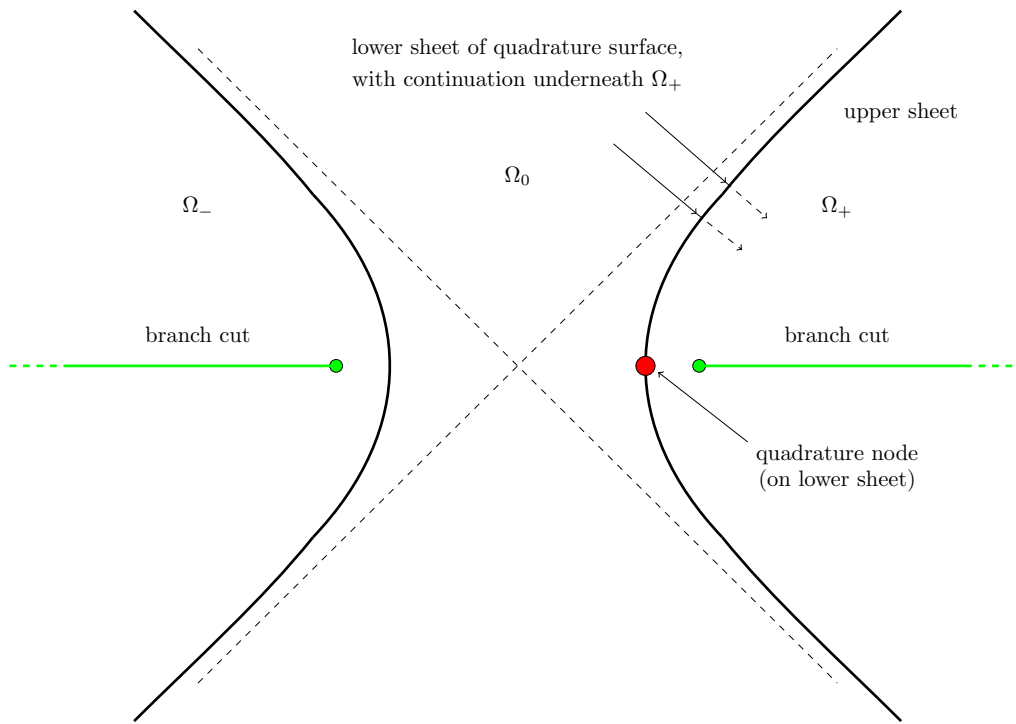}
\end{center}
\caption{The hyperbola $x^2-y^2=1$ as part of the Riemann sphere,
with the single quadrature node (of order two) located on the curve itself, however
on a diffeent sheet (as interior point of $\Omega_0$ and its continuation under $\Omega_+$). }
\label{fig:hyperbola3}
\end{figure}


\subsection{Quadrature data}

According to (\ref{sfqi2}) the quadrature nodes come from the solutions $\zeta\in \U$ of $1+f(\zeta)f^*(\zeta)=0$. 
This equation spells out to
$$
c^4\zeta^4+\big(4+2(a^2-b^2)\big)c^2\zeta^2+c^4=0,
$$
whereby
\begin{equation}\label{c2}
(c^2\zeta^2)_\pm=-(2+a^2-b^2)\pm 2\sqrt{(1+a^2)(1-b^2)}
\end{equation}
giving in total four solutions in $\C$,
\begin{equation}\label{zetapmh}
c\,\zeta_{j}=\pm\sqrt{-(2+a^2-b^2)\pm 2\sqrt{(1+a^2)(1-b^2)}},
\end{equation}
($j=1,2,3,4$), corresponding to different choices of square roots. 
Clearly the symmetries $\zeta\mapsto \bar{\zeta}$ and $\zeta\mapsto -\zeta$  act on the set of solutions.
In addition, there are symmetries with respect to the unit circle since $\zeta_1\zeta_2\zeta_3\zeta_4=1$.
We aim at selecting square roots so that $\zeta_1, \zeta_2\in\U$
and $\zeta_3,\zeta_4\in\L$. 

The parameter value $b=1$ marks a phase change, and we have to consider three different regimes:
\begin{itemize}

\item  \,$0<b<1$: In this regime the right member of (\ref{c2}) is a negative real number for both choices of signs.
Indeed 
$$
(2+a^2-b^2)^2-\big(2\sqrt{(1+a^2)(1-b^2)}\big)^2=c^4>0.
$$
It follows that all roots $\zeta_j$ in (\ref{zetapm}) are purely imaginary, and that we can order them according to
$$
\im \zeta_4<\im \zeta_3<0<\im \zeta_1<\im \zeta_2. 
$$
This means that $\zeta_1$ and $\zeta_3$ correspond to choosing the plus sign in (\ref{c2}), $\zeta_2$ and
$\zeta_4$ to choosing the minus sign, and that the outer square root in (\ref{zetapmh}) is chosen so that $\zeta_1, \zeta_2\in \U$.   

\item  \,$b=1$: This case is similar, but with two double roots:
$$
\zeta_4=\zeta_3=-\I, \quad \zeta_1 =\zeta_2=\I.   
$$

\item \,$b>1$: Now we write (\ref{zetapmh}) as
\begin{equation}\label{zetapmh1}
c\,\zeta_{j}=\pm\sqrt{\alpha\pm\I\beta},
\end{equation}
where
\begin{align*}
\alpha&=-(2+a^2-b^2),\\
\beta&=2\sqrt{(a^2+1)(b^2-1)}>0,
\end{align*}
and we order the roots so that $j=1,3$
correspond to $\pm\sqrt{\alpha+\I\beta}$ with $\zeta_1\in\U$, $\zeta_3\in \L$, and $j=2,4$ correspond to $\pm\sqrt{\alpha-\I\beta}$
with $\zeta_2\in \U$, $\zeta_4\in\L$. Since $\beta>0$ this means that $\zeta_j$ is in the $j$:th quadrant for each $j=1,2,3,4$.
\end{itemize}
 
With the above agreements  the identity (\ref{sfqi2}) becomes, when $b\ne 1$,
$$
\frac{1}{2\pi \I} \int_{\U} h\,\frac{d\bar{f}\wedge d{f}}{(1+|f|)^2} 
=-\sum_{1+ff^*=0}{\rm Res\,}\Big(\frac{h  }{1+ff^*}\cdot\frac{df}{f}\Big)
$$
\begin{equation}\label{roughqi}
=-\sum_{j=1}^2\frac{h(\zeta_j)}{c^2\zeta_j^2+2+a^2-b^2}
\cdot \frac{c^2\zeta_j^2-(a+\I b)^2}{c^2\zeta_j^2+(a+\I b)^2}.
\end{equation}
This is in principle of the form
\begin{equation}\label{qihyperbolaU}
\frac{1}{2\pi \I} \int_{\U} h\,\frac{d\bar{f}\wedge d{f}}{(1+|f|)^2} =A_1h(\zeta_1)+A_2h(\zeta_2).
\end{equation}
The quadrature identity associated to the lower half plane is of the same form:
\begin{equation}\label{qihyperbolaL}
\frac{1}{2\pi \I} \int_{\L} h\,\frac{d\bar{f}\wedge d{f}}{(1+|f|)^2} =A_3h(\zeta_3)+A_4h(\zeta_4).
\end{equation}

In similarity with the ellipse case, insertion of (\ref{c2}) in (\ref{roughqi}) gives only two possible values, $A_+$ and $A_-$, of the coefficients $A_j$.
In the hyperbola case these need not be real, and to exhibit the situation clearly we have to separate the cases $b<1$ and $b>1$. 
After some computations one gets
\begin{equation}\label{Apm1}
A_\pm=\frac{1}{2}\Big(1\pm\frac{\I ab}{\sqrt{(1+a^2)(1-b^2)}}\Big) \quad \text{when }b<1,
\end{equation}
\begin{equation}\label{Apm2}
A_\pm=\frac{1}{2}\Big(1\pm\frac{ab}{\sqrt{(a^2+1)(b^2-1)}}\Big) \quad \text{when }b>1.
\end{equation}
In both cases
$$
A_++A_-=1,
$$
and, for the difference,
\begin{equation}\label{AminusA}
A_+-A_-=
\frac{\I ab}{\sqrt{(1+a^2)(1-b^2)}}\quad \text{if } b<1,
\end{equation}
\begin{equation}\label{AminusA2}
A_+-A_-=\frac{ab}{\sqrt{(a^2+1)(b^2-1)}}\quad \text{if } b>1,
\end{equation}
with positive square roots in both cases.  

A detailed analysis of the above formulas leads to the following identifications, for (\ref{qihyperbolaU}) and (\ref{qihyperbolaL}):
\begin{equation}\label{AjApm}
A_1=A_3=A_-, \quad A_2=A_4=A_+.
\end{equation}
As a consequence, the main quadrature identity (\ref{qihyperbolaU}) takes the form
$$
\frac{1}{2\pi \I} \int_{\U} h\,\frac{d\bar{f}\wedge d{f}}{(1+|f|)^2} =
\frac{1}{2}\big(h(\zeta_1)+h{(\zeta_2)}\big)
-\frac{\I ab}{2\sqrt{(1+a^2)(1-b^2)}}\big(h(\zeta_1)-h(\zeta_2)\big) 
$$
when $b<1$, 
$$
\frac{1}{2\pi \I} \int_{\U} h\,\frac{d\bar{f}\wedge d{f}}{(1+|f|)^2} =
\frac{1}{2}\big(h(\zeta_1)+h{(\zeta_2)}\big)
-\frac{ab}{2\sqrt{(a^2+1)(b^2-1)}}\big(h(\zeta_1)-h(\zeta_2)\big) 
$$
when $b>1$. 
The first terms in the left member are quite natural since the partly double-sheeted image domain of $\U$ is to
occupy as much area as the Riemann sphere itself (the same for the image domain of $\L$). The second term
is purely imaginary in the case $b<1$,  and a negative real number in the case $b>1$. Note that it be made arbitrarily 
big by choosing $b$ to be sufficiently close to one. Also, $b>1$ can be adapted so that the contribution from $h(\zeta_1)$
disappears, see further Section~\ref{sec:loss of weight}. 

Turning to the special case $b=1$, we have $\zeta_1=\zeta_2=\I$ and the quadrature identity becomes
\begin{equation}\label{qihyperbolab}
\frac{1}{2\pi \I} \int_{\U} h\,\frac{d\bar{f}\wedge d{f}}{(1+|f|)^2} =h(\I)-\frac{a}{{1+a^2}}h^\prime(\I). 
\end{equation}
One can show this by letting $\zeta_1,\zeta_2\to \I$ (whereby $b\to 1$)  in the right member of (\ref{qihyperbolaU}) and using 
(\ref{zetapmh}), (\ref{AminusA}), (\ref{AminusA2}), (\ref{AjApm}) when passing to the limit:
$$
A_1h(\zeta_1)+A_2h(\zeta_2)=\frac{1}{2}\big(A_1+A_2\big)\big(h(\zeta_1)+h(\zeta_2)\big)
+\frac{1}{2}\big(A_1-A_2\big)\big(h(\zeta_1)-h(\zeta_2)\big)
$$
$$
=\big(A_++A_-\big)h(\zeta_{1,2})
-\frac{1}{2}\big(A_+-A_-\big)(\zeta_1-\zeta_2)\cdot\frac{h(\zeta_1)-h(\zeta_2)}{\zeta_1-\zeta_2}
\to h(\I)-\frac{a}{1+a^2}h^\prime (\I).
$$


\subsection{Quadrature identities in the physical plane}\label{sec:qiphysical}

Returning to the general hyperbola, and considering it now  as a partly double-sheeted domain in the $z$-plane, 
the quadrature nodes $z_j=f(\zeta_j)$  in this picture are most easily obtained by solving the equation $1+zS(z)=0$,
namely (\ref{c4z}) with $b$ replaced by $\I b$. Thus (compare (\ref{zint}), (\ref{zext})),
$$
z_j=\pm\frac{1}{c}\sqrt{2a^2b^2-a^2+b^2 \pm 2ab\sqrt{(a^2+1)(b^2-1)} }.
$$
The analysis of the square roots is similar to what we had in the previous section: the outer square root selects between the two pairs
$\{z_1,z_2\}$ and $\{z_3,z_4\}$ and the inner square root makes selections within each of them. The result is the 
following classification (compare Figure~\ref{fig:hyperbola14}).

\begin{itemize}
\item  \,$0<b<1$: The nodes $z_j$ consist of two complex conjugate pairs located on the  unit circle
with $z_1,z_2,z_3,z_4$ in, respectively, the fourth, first, second and third quadrant. 

\item  \,$b=1$: $z_1=z_2= 1$, $z_3=z_4=-1$.

\item \,$b>1$: All nodes are real and satisfy 
$$
z_3<-1<z_4<0<z_2<1<z_1.
$$
\end{itemize}

The quadrature identity is in all cases of the form
\begin{equation}\label{qihyperbolaphysical}
\frac{1}{2\pi \I} \int_{\Omega} h(z)\,\frac{d\bar{z}\wedge d{z}}{(1+|z|)^2} =A_-h(z_{1})+A_+h(z_{2})
\end{equation}
with $A_\pm$ as in (\ref{Apm1}), (\ref{Apm2}). The integration takes place on a two-sheeted and branched Riemann surface
and the function $h$ is allowed to take different values on the two sheets, these being joined by branch points at $\pm c$. 

When $b=1$, with $z_1=z_2=1$ and a derivative of $h$ appearing in the quadrature identity, see (\ref{qihyperbolab}), 
one has to take the chain rule into account when  interpreting that derivative as a derivative with respect
to $z$: since $\frac{d}{d\zeta}h(f(\zeta))=f^\prime(\zeta)h^\prime(z)$ a factor 
$f^\prime(\I)=a$ has to be included. The result is
\begin{equation}\label{qihhprime}
\frac{1}{2\pi \I} \int_{\Omega} h(z)\,\frac{d\bar{z}\wedge d{z}}{(1+|z|)^2} =h(1)-\frac{a^2}{{1+a^2}}h^\prime(1).
\end{equation}

Specializing further to $a=b=1$, the unique quadrature node $z_1=z_2=1$ for $\Omega$
is in this case located on the curve itself. The node belongs actually to the interior of that  sheet of
domain $\Omega$ to which $z_1=z_2$ belongs, but happens to simultaneously be on the boundary of the other sheet. 
See Figure~\ref{fig:hyperbola3}. 


\begin{figure}
\includegraphics[scale=0.35, trim=100 200 0 150]{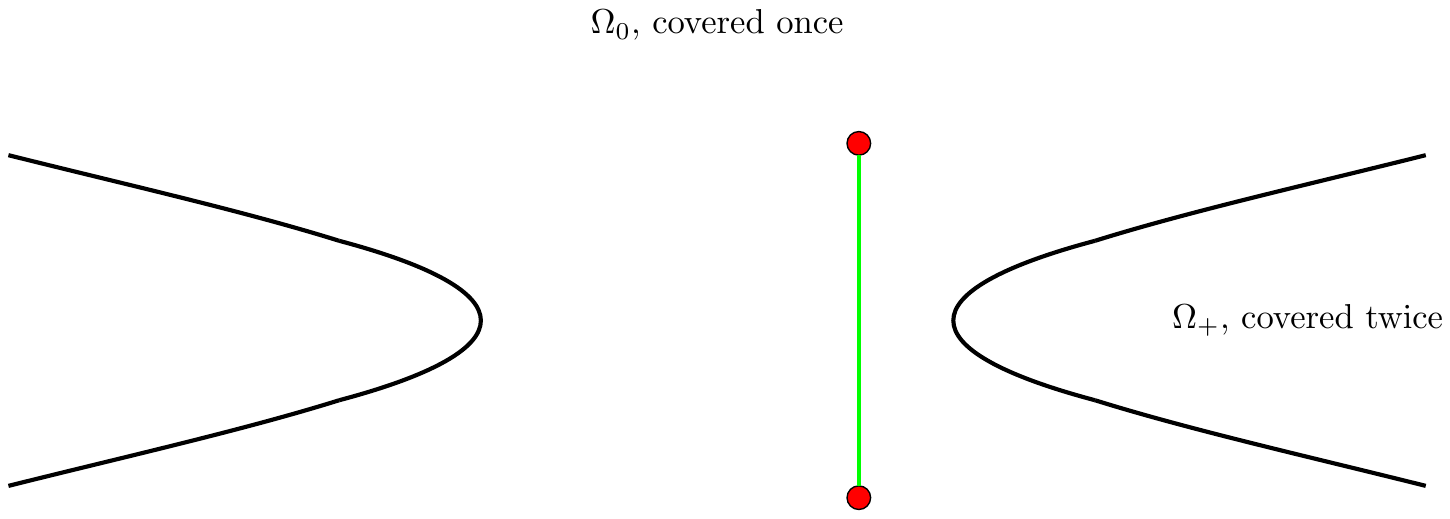}
\includegraphics[scale=0.35, trim=0 120 10 150]{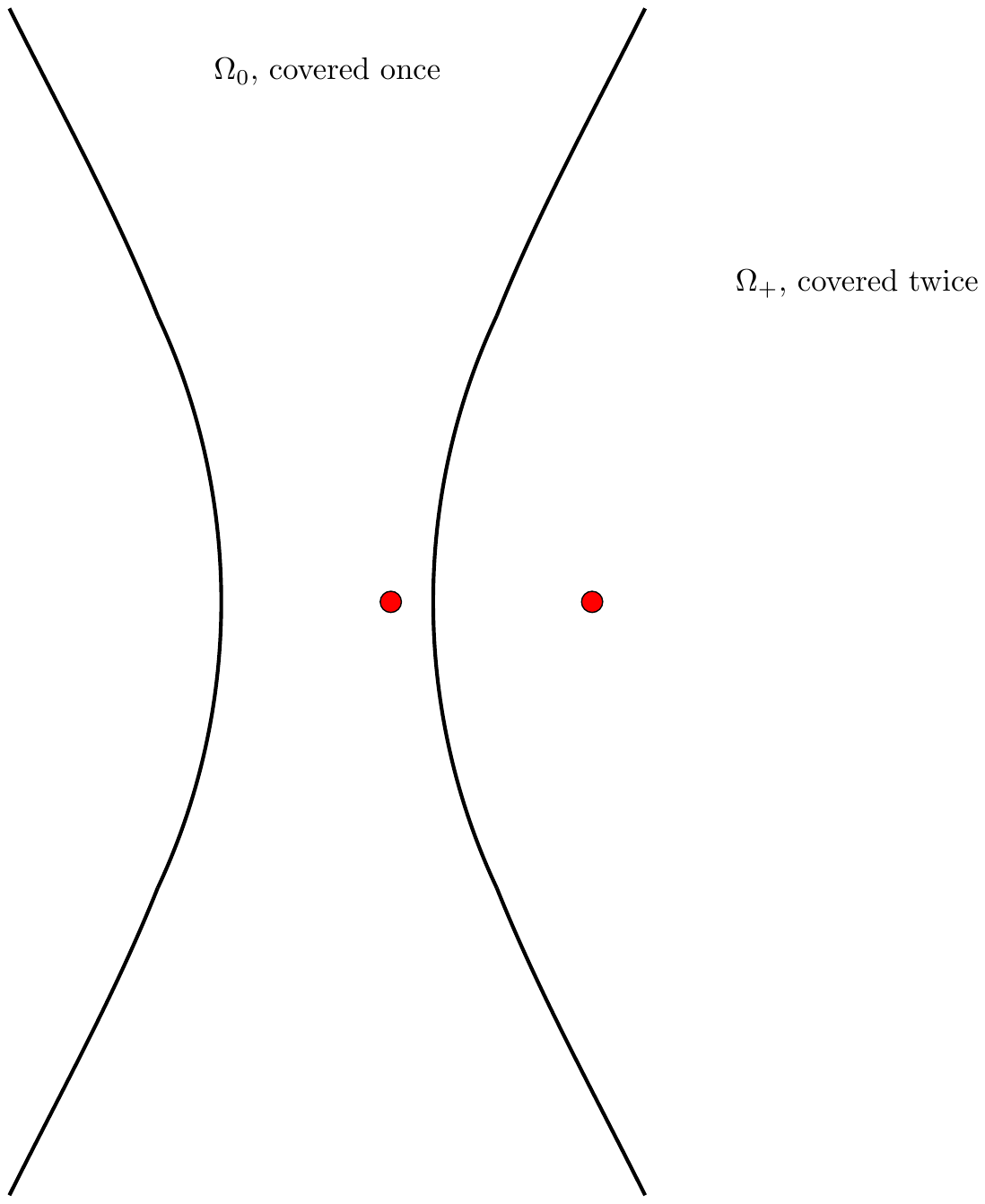}
\caption{Quadrature properties of hyperbola. {\it Left}: $a>b$,  $b<1$ and non-real weights, or else dipole density along green line. 
{\it Right:} $a<b$, $b>1$ and real weights.}
\label{fig:hyperbola14} 
\end{figure}


\subsection{Loss of weight}\label{sec:loss of weight}

Considering in some more detail the case $b>1$, a perhaps astonishing phenomenon is that the coefficient for $h(z_1)$ 
in (\ref{qihyperbolaphysical}) may vanish. 
In fact, given any $a>0$, the coefficient $A_-$ vanishes if
$ab=\sqrt{(a^2+1)(b^2-1)}$, which occurs for $b=\sqrt{1+a^2}$. See equation (\ref{Apm2}). The quadrature nodes are then, in the parameter space, 
given by $c\,\zeta_j=\pm\I\sqrt{1\mp 2\I ab}$, the order of the inner signs being coupled to those in $A_\pm$. 
The nodes in the $z$-plane become 
$$
z_j=f(\zeta_j)=
\begin{cases}
c \quad &\text{for } j=1,\\
1/c \quad &\text{for } j=2.
\end{cases}
$$
 
Thus the node $z_1=c$ looses its weight and becomes only  ``virtual''. The quadrature identity reduces to
\begin{equation}\label{onepointqi}
\frac{1}{2\pi \I} \int_{\Omega} h\,\frac{d\bar{z}\wedge d{z}}{(1+|z|)^2} =\frac{1}{2}\Big(1+\frac{ab}{\sqrt{(a^2+1)(b^2-1)}}\Big)  h(\frac{1}{c}).
\end{equation}
An important observation is that $c$ is a branch point. In other words, $f^\prime(\zeta_1)=0$ in the above notation, and 
since $d\bar{f}\wedge df/(1+|f|^2)^2$ contains the factor $|f^\prime|^2$ this makes functions $h$ with a light singularity be integrable 
when lifted to the covering surface. Slightly singular functions could therefore in principle be accepted as test functions. 
In that sense the loss of a quadrature node can be explained by shortage of test functions, since we actually  allow only those which are holomorphic. 

The above phenomenon, loss of weight for quadrature nodes at branch points, 
has been discussed also in \cite{Gustafsson-Lin-2021} (see in particular Section~8.4.1 there), and it 
is responsible for crucial properties of contractive zero divisors in Bergman space
\cite{Hedenmalm-1991, Hedenmalm-Korenblum-Zhu-2000}. 

\subsection{Non-real coefficients and dipole densities}

When $b>1$, the coefficients $A_j=A_\pm$ in (\ref{qihyperbolaphysical}) are real numbers and the quadrature
identity holds for the real and imaginary parts of $h$ independently of each other. This means that 
(\ref{qihyperbolaphysical}) actually holds for harmonic test functions $h$. However, when $b<1$ the coefficients $A_\pm$
are non-real and the identity  (\ref{qihyperbolaphysical}) mixes the real and imaginary parts of $h$. One can still obtain
a quadrature identity for harmonic functions, but then one must extend the setting and allow more general quadrature functionals in the right
member. To make this precise we rewrite (\ref{qihyperbolaphysical}), when $b<1$ and with $z_1=x_1+\I y_1$, as
$$
\frac{1}{2\pi \I} \int_{\Omega} h(z)\,\frac{d\bar{z}\wedge d{z}}{(1+|z|)^2} =\frac{1}{2}\big(h(z_{1})+h(\bar{z}_{1})\big)
-\frac{\I ab}{2\sqrt{(1+a^2)(1-b^2)}}\big(h(z_1)-h_2(\bar{z}_1)\big)
$$
$$
=\frac{1}{2}\big(h(z_{1})+h(\bar{z}_{1})\big)-\frac{\I ab}{2\sqrt{(1+a^2)(1-b^2)}} \cdot \int_{-y_1}^{y_1} \frac{\partial h(x_1+\I y)}{\partial y} dy
$$
$$
=\frac{1}{2}\big(h(z_{1})+h(\bar{z}_{1})\big)+\frac{ab}{2\sqrt{(1+a^2)(1-b^2)}} \cdot \int_{-y_1}^{y_1} \frac{\partial h(x_1+\I y)}{\partial x_1} dy.
$$
On the latter form there is no mixing between real and imaginary parts, hence the equation holds for harmonic functions $h$ 
without involving  their harmonic conjugates.
A potential theoretic interpretation of the equation is that it expresses gravi-equivalence between the spherical area measure restricted to $\Omega$
and two point sources at $z_1$ and $\bar{z}_1$ plus a line dipole density on the segment from $\bar{z}_1$ to $z_1$.

As $b\to 1$, whereby $y_1\to 0$, the formula agrees in the limit with (\ref{qihhprime}).
One has to keep in mind that $y_1<0$, and for that reason the final integral has size 
$\approx -2y_1\cdot \frac{\partial h(x_1)}{\partial x_1}$.


\subsection{Summary}

In summary, we have seen that multi-sheeted domains bounded by hyperbolas are indeed quadrature domains of order two
for the spherical area measure. If one of the quadrature nodes happens to coincide with a branch point the order actually decreases 
to one. Replacing spherical area measure by the Euclidean, the mentioned domains
become null quadrature domains (still multi-sheeted), as indicated in Remark~\ref{rem:varepsilon}. The explanation is that
the quadrature nodes move to infinity and that the holomorphic functions $h$ used as test functions have to vanish of order two at 
infinity to qualify as  test functions. Hence the quadrature nodes become invisible.

As for the forms of the quadrature identities we obtained the following identities in the parameter space:

\begin{proposition}\label{prop:qihyperbola}
The quadrature identity (\ref{sfqi1}) takes in the case of the partly double sheeted domain $\Omega$ in (\ref{multipleOmega})
bounded by the hyperbola (\ref{hyperbola0}), and in terms of the parametrization
$f:\U\to \Omega$ given by (\ref{fhyperbola}),
the specific form (\ref{qihyperbolaU}). Written in a different way,
$$
\frac{1}{2\pi \I} \int_{\U} h\,\frac{df\wedge d\bar{f}}{(1+|f|)^2} =\frac{1}{2}(h(\zeta_1)+h(\zeta_2))+\frac{1}{2}\frac{\I ab}{\sqrt{(1+a^2)(1-b^2)}}(h(\zeta_1)-h(\zeta_2)),
$$
when $b<1$, and
$$
\frac{1}{2\pi \I} \int_{\U} h\,\frac{df\wedge d\bar{f}}{(1+|f|)^2} =\frac{1}{2}(h(\zeta_1)+h(\zeta_2))+\frac{1}{2}\frac{ab}{\sqrt{(a^2+1)(b^2-1)}}(h(\zeta_1)-h(\zeta_2)),
$$
when $b>1$. The quadrature nodes $\zeta_j$ are given by (\ref{zetapmh}). When $b=1$, then $\zeta_1=\zeta_2=\I$, 
and the identity has the form (\ref{qihyperbolab}), i.e.
$$
\frac{1}{2\pi \I} \int_{\U} h\,\frac{d\bar{f}\wedge d{f}}{(1+|f|)^2} =h(\I)+\frac{a}{{1+a^2}}h^\prime(\I).
$$
In all above identities $h$ is assumed to be holomorphic in $\U$ and integrable with respect to the measure in the left member.
\end{proposition}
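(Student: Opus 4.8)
The plan is to specialize the general residue identity (\ref{sfqi2}) of Theorem~\ref{thm:qi} to the explicit parametrization $f$ of (\ref{fhyperbola}), taken together with $M_+=\U$ and $J(\zeta)=\bar\zeta$, and then simply to read off the residues. First I would record that the quadrature nodes are precisely the solutions in $\U$ of $1+ff^*=0$. Writing out this equation with $f$ and $f^*$ as above reduces it to the quartic $c^4\zeta^4+(4+2(a^2-b^2))c^2\zeta^2+c^4=0$, whose roots are displayed in (\ref{zetapmh}). Since post-composition with a metric-preserving M\"obius map (\ref{Mobius}) leaves the left member unchanged, the hypothesis of Theorem~\ref{thm:qi} that the poles of $f$ be kept off $\partial M_+$ may be arranged, and under the non-vanishing assumption stated there the identity (\ref{sfqi1}) collapses to (\ref{sfqi2}), so that only the two roots $\zeta_1,\zeta_2\in\U$ contribute.

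Next, because for $b\neq 1$ the quartic has four distinct roots, every zero of $1+ff^*$ is simple, and the residue at a simple pole is $h\big/(1+ff^*)'\cdot f'/f$. Evaluating this at $\zeta_1$ and $\zeta_2$ gives exactly (\ref{roughqi}), i.e.\ an identity of the form (\ref{qihyperbolaU}) with two coefficients $A_1,A_2$. Substituting the two possible values $(c^2\zeta^2)_\pm$ from (\ref{c2}) into $\frac{1}{(1+ff^*)'}\cdot\frac{f'}{f}$ and simplifying then produces the two values $A_\pm$ recorded in (\ref{Apm1}) and (\ref{Apm2}); the division into the regimes $b<1$ and $b>1$ is forced by whether the inner root $\sqrt{(1+a^2)(1-b^2)}$ is real or imaginary.

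The delicate point, and the step I expect to be the main obstacle, is the correct bookkeeping (\ref{AjApm}) of which of $A_+,A_-$ attaches to which node. This is not automatic: through the two nested square roots in (\ref{zetapmh}) one must track which sign choice places $\zeta_j$ in $\U$ rather than $\L$, and then couple that sign to the one appearing in $(c^2\zeta^2)_\pm$. Carrying this matching through yields $A_1=A_-$ and $A_2=A_+$, and inserting these values gives the two displayed identities. Here one must also keep careful account of orientation: the left member of the proposition uses $df\wedge d\bar f=-d\bar f\wedge df$, and it is this convention that pins down the sign of the second, antisymmetric, term.

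Finally, the case $b=1$ is a confluent limit in which the two simple nodes coalesce, $\zeta_1,\zeta_2\to\I$, so that a pole of order one becomes a double pole and the residue acquires a first-derivative term. I would recover (\ref{qihyperbolab}) by rewriting $A_1h(\zeta_1)+A_2h(\zeta_2)$ as a symmetric part $(A_++A_-)h$ plus the antisymmetric piece $\tfrac12(A_+-A_-)(\zeta_1-\zeta_2)\cdot\frac{h(\zeta_1)-h(\zeta_2)}{\zeta_1-\zeta_2}$, and then passing to the limit using (\ref{AminusA}), (\ref{AminusA2}) for the diverging factor $A_+-A_-$ together with the vanishing rate of $\zeta_1-\zeta_2$ extracted from (\ref{zetapmh}); the finite product of these two produces the coefficient $a/(1+a^2)$ of $h'(\I)$.
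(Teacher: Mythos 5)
Your proposal is correct and follows essentially the same route as the paper: the proof of Proposition~\ref{prop:qihyperbola} is precisely the computation carried out in Section~\ref{sec:hyperbola}, namely specializing (\ref{sfqi2}) to $f$ from (\ref{fhyperbola}), solving the quartic $1+ff^*=0$ for the nodes (\ref{zetapmh}), evaluating the simple-pole residues to get (\ref{roughqi}) and the coefficients (\ref{Apm1}), (\ref{Apm2}), performing the sign bookkeeping (\ref{AjApm}), and treating $b=1$ by exactly your symmetric/antisymmetric confluent limit via (\ref{AminusA}), (\ref{AminusA2}). One caveat: your appeal to $df\wedge d\bar{f}=-d\bar{f}\wedge df$ cannot account for a sign flip in only the antisymmetric term (reversing the wedge order negates the entire right member), so the sign discrepancies between (\ref{qihyperbolaU}), (\ref{qihyperbolab}) as derived in the text and their restatement in the proposition are internal inconsistencies of the paper's statement rather than something your argument should try to justify.
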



\section{The parabola}\label{sec:parabola}

We consider a general parabola on the form
$$
\Gamma: \quad 4ax=y^2, 
$$
or
\begin{equation}\label{parabola} 
8a(z+\bar{z})+(z-\bar{z})^2=0.
\end{equation}
As a complex curve in $\C\P^2$ with coordinates $[\,t:z:w\,]$ it is  
\begin{equation}\label{projective parabola}
8at(z+w)+(z-w)^2=0,
\end{equation}
and in real projective space $\R\P^2_{\rm red}$, with coordinates $[\,t:x+\I y\,]$,
$$
4atx=y^2.
$$

When viewed in $\R\P^2_{\rm red}$ the parabola is a smooth curve which passes through exactly one point of infinity, namely $[\,0: 1\,]$. 
At that point the parabola is tangent to the line of infinity.
Indeed, near $[\,0:1\,]$ the line of infinity can be parametrized as $u\mapsto [\,0:e^{\I u}\,]$, 
and the parabola by $v\mapsto [\,\frac{v^2}{4a}:1+\I v\,]$. Here $u,v$ are real, with $u=v=0$ corresponding to the point
$[\,0:1\,]$. The derivatives at $u=v=0$ are $[\,0:\I\,]$ in both cases, hence the two curves are tangent to each other.
In comparison, the hyperbola passes through two real points of infinity   
while the real ellipse curve does not reach infinity at all.
On the other hand, when viewing the parabola as a curve in $\C\cup \{\infty\}$, it becomes singular at the point of
infinity, having a cusp there. Indeed, the parabola becomes a cardioid when seen from the point of infinity, 
see Section~\ref{sec:cardioid}.

\begin{remark}\label{rem:infinities}
In the complex projective space $\C\P^2$, the parabola (\ref{projective parabola}) again meets the line of infinity only at one point, namely 
$[\,t:z:w\,]=[\,0: 1:1\,]$, which is a point of tangency. 
For the general ellipse (\ref{ellipse}) the points of infinity for the complexified boundary curve are $[\,0:a-b:a+b\,]$ and $[\,0:a+b:a-b\,]$. 
In the special case of a circle ($a=b=r$) these points become
$[\,0:0:1\,]$ and $[\,0:1:0\,]$, and these are in fact exactly the points of infinity valid for an arbitrary classical quadrature domain as in (\ref{dA}),
see \cite{Gustafsson-1988}. For the general hyperbola (\ref{hyperbola0}) the points of infinity become 
$[\,0:a-\I b:a+\I b\,]$ and $[\,0:a+\I b:a-\I b\,]$.  
\end{remark}

Solving (\ref{parabola}) for $\bar{z}$ gives the Schwarz function
\begin{equation}\label{Sparabola}
S(z)=z-4a+\sqrt{16a(a-z)},
\end{equation}
which has one branch point at $z=a$, the other being $z=\infty$. Let $D$ denote that component of
$\C\setminus\Gamma$ which contains the branch cut $[a,+\infty)$, and let $\Omega$ the complementary component
in $\C$.

It is known \cite{Sakai-1981} that $\Omega$ is
a null quadrature domain for planar area measure. The corresponding result is also true in higher dimensions
and can then be proved by exhausting the paraboloid by ellipsoids and using the main result
in \cite{Friedman-Sakai-1986}. This technique of exhaustions has been developed by H.~Shahgholian 
\cite{Shahgholian-1992}, L.~Karp  \cite{Karp-2008} and others.

For the spherical measure, $\Omega$ becomes a two point quadrature domain. The real parabola $\Gamma$ can be parametrized by
$z=x+\I y =f(\zeta)$, $ \zeta\in\R$, where 
$$
f(\zeta)=4a(\zeta^2 +\I \zeta).
$$
This function is meromorphic on the Riemann sphere and is univalent in the upper half plane $\U$, with $f(\U)=\Omega$.
Taking $M=\C\cup\{\infty\}$, $M_+=\U$, we are thus in a similar situation as for the hyperbola. 
The holomorphically reflected function with respect to $J(\zeta)=\bar{\zeta}$ is
$$
f^*(\zeta)=4a(\zeta^2-\I \zeta)
$$
and the quadrature identity becomes, as in (\ref{sfqi2}),
\begin{equation}\label{qiparabola}
\frac{1}{2\pi \I} \int_{\U} h\,\frac{df\wedge d\bar{f}}{(1+|f|)^2} 
=-\sum_{1+ff^*=0} {\rm Res\,} \Big(\frac{h  }{1+ff^*}\cdot\frac{df}{f}\Big).
\end{equation}

The equation $1+f f^*=0$ giving the quadrature nodes spells out to be $1+16a^2\zeta^2 (\zeta^2+1)=0$, with roots
\begin{equation}\label{zetaj}
\zeta=\zeta_j=\pm \sqrt{-\frac{1}{2}\pm \frac{1}{2}\sqrt{1-\frac{1}{4a^2}}}.
\end{equation}
These numbers  $\zeta_j$ ($j=1,2,3,4$) are non-real and come in complex conjugate pairs. 
The quadrature nodes are those two,  say $\zeta_1$ and $\zeta_2$, which are in the upper half
plane $\U$.  For the further analysis there are three cases: 
\begin{itemize}
\item $0<a<\frac{1}{2}$: There is one root in each quadrant. 
\item $a=\frac{1}{2}$: There are two double roots on the imaginary axis, $\zeta_1=\zeta_2=\frac{\I}{\sqrt{2}}$ and $\zeta_3=\zeta_4=-\frac{\I}{\sqrt{2}}$.
\item $\frac{1}{2}<a<\infty$: The double roots have been split into conjugate pairs of simple roots located on the imaginary axis. 
\end{itemize}

In the case of simple roots $\zeta_j\in \U$ we have
$$
-\sum_{1+ff^*=0} {\rm Res\,} \Big(\frac{h  }{1+ff^*}\cdot\frac{df}{f}\Big)= -\sum_{1+ff^*=0} \frac{h}{(1+ff^*)^\prime}\cdot \frac{f^\prime}{f}
$$
\begin{equation}\label{qiparabola1}
=-\frac{1}{32a^2}\sum_{j=1}^2 \frac{2\zeta_j+\I}{{\zeta_j^2(2\zeta_j^2+1)(\zeta_j+\I) }}\cdot h(\zeta_j),
\end{equation}
which in view of (\ref{zetaj}) makes the quadrature formula (\ref{qiparabola}) explicit. 
In the $z$-plane one gets 
$$
z_j=f(\zeta_j)={-2a\pm \sqrt{4a^2-1}}\pm \I\sqrt{-8a^2\pm 4a\sqrt{4a^2-1}},
$$
but simplifications of this expression seem difficult in general. Therefore we shall be more explicit 
only in the special case of double roots, namely when $a=\frac{1}{2}$. Then
$$
z_{1,2}=f(\zeta_{1,2})=f(\frac{\I}{\sqrt{2}})=-1-\sqrt{2},
$$
and the quadrature formula for $\Omega$ becomes, by straight-forward computations using (\ref{intres1}), (\ref{Sparabola}) directly,
$$
\frac{1}{2\pi \I }\int_\Omega h(z)\frac{d\bar{z}\wedge dz}{(1+|z|^2)^2}
=(1-\frac{1}{2\sqrt{2}})\cdot h(-1-\sqrt{2})+\frac{1+\sqrt{2}}{2}\cdot h'(-1-\sqrt{2}).
$$
Note that the coefficient $1-1/(2\sqrt{2})$ is slightly smaller than one ($=$ the area of the full sphere), as one might expect from  the exterior $\Omega$
of the parabola occupying a major part of the sphere. See Figure~\ref{fig:parabola}.


\begin{figure}
\begin{center}
\includegraphics[width=\textwidth, scale=1.4, trim=70 220 50 250]{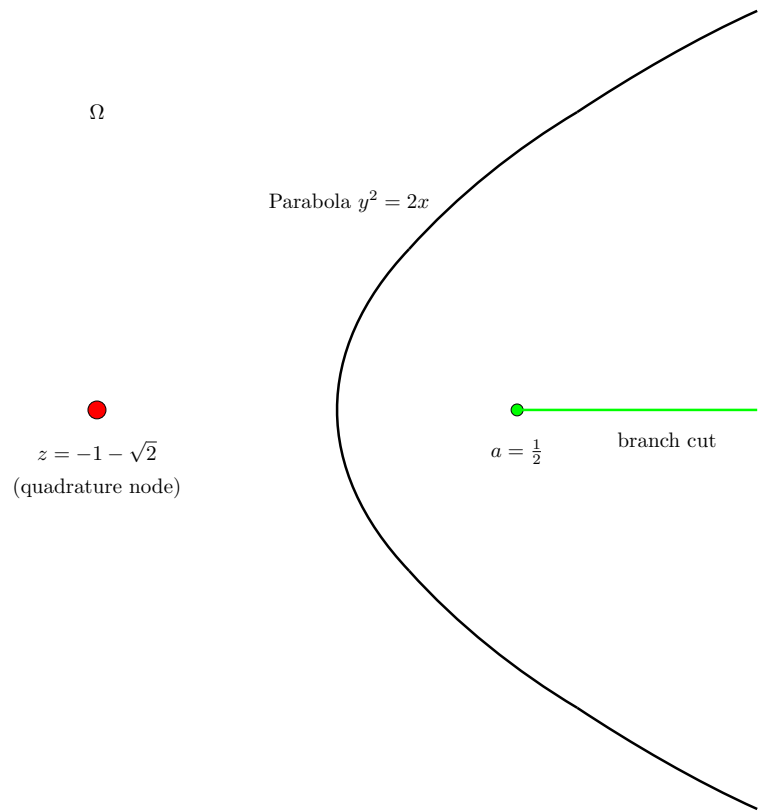}
\end{center}
\caption{The parabola with $a=\frac{1}{2}$. Quadrature node for exterior region, and of order two, in red. 
Branch point and branch cut (optional) in green. Interpretation similar to that for ellipse in Figure~\ref{fig:ellipse}.}
\label{fig:parabola}
\end{figure}


\section{Domains obtained by inversions of quadrics}\label{sec:inversions}

\subsection{The hippopede, or Neumann oval}

Inversion, for example by the antipodal map $z\mapsto -1/\bar{z}$, of the ellipse on the form (\ref{ellipse1}) gives
$$
z^2+\bar{z}^2-\frac{2(a^2+b^2)}{a^2-b^2}z\bar{z}+\frac{4a^2b^2}{a^2-b^2}z^2\bar{z}^2=0,
$$
or, on real form,
\begin{equation}\label{hippopede2} 
a^2b^2(x^2+y^2)^2-a^2y^2-b^2x^2=0.
\end{equation} 
See Figure~\ref{fig:hippopede} for the shape. 
It can be viewed as the  ``smash sum'' of two partially overlapping disks, 
compare \cite{Diaconis-Fulton-1990, Levine-peres-2010} for the planar case. 

We see that the equation now has degree four, and it describes a classical two point
quadrature domain, known as the Neumann oval or, according to some sources, hippopede, 
which is a special case of a hypocycloid. In the planar case (Euclidean metric), the quadrature identity for the domain 
$\Omega$ inside (\ref{hippopede2}) is quite simple:
$$
\frac{1}{2\pi\I}\int_\Omega h(z)d\bar{z}dz=\frac{a^2+b^2}{4a^2b^2}\Big( h(-\frac{\I c}{2ab})+h(\frac{\I c}{2ab})\Big).
$$
For the spherical metric the formula will be as in (\ref{qiellipsephysical}) along with (\ref{Aint}), but with the nodes $\pm z_{\rm int}$ moved by the
map which performs the inversion. In other words, the quadrature nodes will be $\pm 1/\bar{z}_{\rm int}$ with $z_{\rm int}$ as in (\ref{zint}).

The curve (\ref{hippopede2}) has degree $d=4$, but is still uniformized by the Riemann sphere, hence it has genus zero.
Therefore the genus formula (\ref{genus formula}) says that there must three singular points. Some of the singular points
come form the quadrature nodes: in the general setting of  (\ref{qi0}) the extension of the 
algebraic curve $\partial\Omega$ to $\C\P^2$ has, at each of the two points of infinity, $[\,0:1:0\,]$ and $[\,0:0:1\,]$, $m$ simple cusps of multiplicities
$n_1,\dots,n_m$ with distinct tangent directions. As the detailed analysis in \cite{Gustafsson-1988}
shows, this gives a total contribution $n(n-1)$ to the term for singular points in the genus formula (\ref{genus formula}). 
Here $n=\sum_{k=1}^m m_k$ is the order of the quadrature identity.

In the present case with the hippopede, $n=2$ and so $n(n-1)=2$. Thus there is one more singular point to account for.
This is what is called a ``special point'' \cite{Shapiro-1987, Shapiro-1992}, 
that is an isolated solution of $S(z)=\bar{z}$.
It is located at $[\,1:0:0\,] \in\C\P^2$, in other words at the origin in the complex plane. 


\begin{figure}
\begin{center}
\includegraphics[width=\textwidth, scale=1.8, trim= 5 450 5 200]{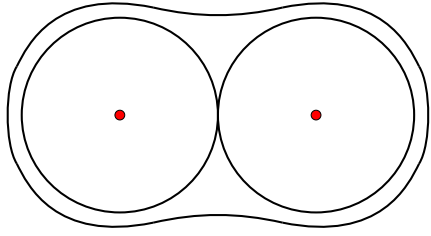}
\end{center}
\caption{Hippopede, along with limiting case consisting of two tangent disks. Quadrature node for Euclidean measure in red.
Those for spherical measure are located closer to the origin. (Compared to the equation (\ref{hippopede2}) in the text, the picture is 
rotated by 90 degrees.)}
\label{fig:hippopede}
\end{figure}


\subsection{The lemniscate}\label{sec:lemniscate}
 
The antipodal version of the hyperbola is a lemniscate. Indeed, the
inversion $z\mapsto -1/\bar{z}$ makes the equation (\ref{hyperbola}) transform into
\begin{equation}\label{hippopede}
z^2+\bar{z}^2-\frac{2(a^2-b^2)}{a^2+b^2}z\bar{z}-\frac{4a^2b^2}{a^2+b^2}z^2\bar{z}^2=0,
\end{equation}
or 
\begin{equation}\label{hippopede1} 
a^2b^2(x^2+y^2)^2+a^2y^2-b^2x^2=0,
\end{equation} 
a Bernoulli lemniscates. Since inversions are rigid transformations of the 
Riemann sphere the lemniscate is a quadrature domain in the same sense as the hyperbola. 
The curve has degree $d=4$ and has an obvious singularity at the origin (intersection between two smooth branches). 

The special case is $a=b=1$ is conspicuous. The lemniscate then becomes $z^2+\bar{z}^2=2z^2\bar{z}^2$, or 
\begin{equation}\label{special lemniscate}
|z^2-\frac{1}{2}|=\frac{1}{2},
\end{equation}
which is the inversion of the hyperbola $x^2-y^2=1$ in Figure~\ref{fig:hyperbola3}.
The quadrature identity takes the form (\ref{qihhprime}) with the node located on the curve, see Figure~\ref{fig:lemniscate}.

While hippopedes and cardioids (next section) are quadrature domains in the traditional sense,
lemniscate domains really require the Riemann surface setting to qualify.
In the same vein, lemniscates do not behave well with respect
to planar Laplacian growth processes, see \cite{Khavinson-Mineev-Putinar-Teororescu-2010}, one would need 
to consider such processes on covering surfaces, like in \cite{Gustafsson-Lin-2021}, in order for lemniscates to fit in.


\begin{figure}
\begin{center}
\includegraphics[width=\textwidth, scale=1, trim=5 400 5 200]{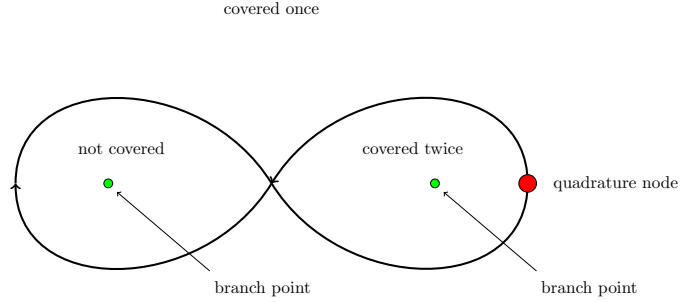}
\end{center}
\caption{The lemniscate (\ref{special lemniscate}) as a multi-sheeted quadrature domain.}
\label{fig:lemniscate} 
\end{figure}


\subsection{The cardioid}\label{sec:cardioid}

On antipodal inversion of the  parabola (\ref{parabola}) one gets an unbounded cardioid.
To keep the cardioid bounded one needs to invert with respect to a point not located on the
parabola itself. The best choice is the focal point $z=a$, and thus setting $w=(z-a)^{-1}$
one gets in the $w$-plane a bounded cardioid having a cusp at $w=0$. 
It can be parametrized from the unit disk by 
$$
w=\frac{1}{2a}\big(\zeta-\frac{1}{2}\zeta^2\big)-\frac{1}{4a} \quad (\zeta \in\D),
$$
and it is a well-studied quadrature domain. In the planar case (Euclidean area measure)
there is a quadrature node of order two at $w=-1/4a$. 

The equation for the boundary becomes, with $w=u+\I v$,
$$
4a^2(u^2+v^2)^2+4au(u^2+v^2)-v^2=0.
$$
again of degree $d=4$. In similarity with the case of the hippopede, the double quadrature node 
accounts for two of the three singular points dictated by the genus formula. The remaining singular point
is the cusp at the origin.


\section{Appendix: Some computational details for the proof of Theorem~\ref{thm:qi}}\label{sec:appendix}

To prove the final statements of Theorem~\ref{thm:qi}, namely to confirm the presence of cancellations leading to (\ref{sfqi2}), 
we shall make a detailed local analysis of the differentials
\begin{equation}\label{differentials}
\frac{hdf}{f(1+ff^*)}, \quad \frac{hf^*df}{1+ff^*}
\end{equation}
near an arbitrary point, which we take to be the origin in a local coordinate $\zeta$.
We assume the following expansions as $\zeta\to 0$, where $a,b,c\ne 0$ and $k,\ell,j$ are integers: 
$$
f(\zeta)= a\zeta^k+\text{higher powers},
$$
$$
f^\prime(\zeta)=b\zeta^\ell+\text{higher powers},
$$
$$ 
f^*(\zeta)=c\zeta^j+\text{higher powers}.
$$
If $k=0$ then $\ell\geq 0$. If $k\ne 0$ then $\ell=k-1$, $b=ka$. For the test functions we normalize the leading coefficient to be one: 
$$
h(\zeta)=\zeta^r +\text{higher powers} \quad (r\geq 0).
$$

We shall prove that under the stated assumptions in Theorem~\ref{thm:qi} there are cancellations in formula (\ref{sfqi1}), repeated here:
$$
\frac{1}{2\pi \I} \int_{M_+} h\,\frac{d\bar{f}\wedge d{f}}{(1+|f|^2)^2} 
=-\sum_{M_+} {\rm Res\,} \frac{h  df}{f(1+ff^*)} +\sum_{\omega_k\in M_+} h(\omega_k),
$$
such that the right member takes the simpler form 
$$
-\sum_{1+ff^*=0} {\rm Res\,} \Big(\frac{h  }{1+ff^*}\cdot\frac{df}{f}\Big)
$$
appearing in (\ref{sfqi2}).
For this purpose we need to identify the residue at $\zeta=0$ of the first differential in (\ref{differentials}), which is
\begin{equation}\label{residuedifferential1}
\frac{hdf}{f(1+ff^*)}=\frac{b}{a}\cdot\frac{\zeta^{r+\ell-k} d\zeta+\dots}{1+ac\zeta^{k+j}+\dots}.
\end{equation}
There are three main cases, and some subcases of these.

\begin{itemize}
\item
$k+j>0$:  In this case there is a residue if and only if $r+\ell-k=-1$. 
\begin{itemize}
\item If $k=0$ then $\ell\geq 0$, hence $r+\ell -k\geq 0$, and there is {\bf no residue}.
\item If $k> 0$ then $r+\ell -k=r-1$ and there is a residue if $r=0$.  The size of the residue is then $b/a=k$. 
However, this {\bf residue cancels} with $k$ terms named $h(\omega_k)$ (the latter ``$k$'' being just an index)
in the first formula in the theorem.
\item If $k<0$ then $j>0$. This is {\bf excluded by assumption}.
\end{itemize}

\item $k+j=0$: In this case the denominator in (\ref{residuedifferential1}) is $1+ac +\mathcal{O}(\zeta)$. 

\begin{itemize}
\item If $1+ac\ne 0$, then the $\mathcal{O}(\zeta)$-term is not the main term, and we again get a residue contribution
if an only if $r+\ell-k=-1$. The rest of the analysis is exactly the same as in the case  $k+j>0$.

\item
If $1+ac=0$, then $1+ff^*$ has a zero at $\zeta=0$, of order $s\geq 1$ say: 
$$
1+f(\zeta)f^*(\zeta)=q\zeta^s+ \mathcal{O}(\zeta^{s+1}),\quad q\ne 0.
$$ 
This means that the differential in (\ref{residuedifferential1}) looks like
$$
\frac{hdf}{f(1+ff^*)}=\frac{b}{aq}\cdot \zeta^{r+\ell-k-s}d\zeta+\text{higher powers},
$$ 
and there is a residue contribution if $r=s+k-\ell -1$.

This case represents the {\bf surviving residues} in equation (\ref{sfqi1}), namely those coming from points where $1+ff^*=0$.

\end{itemize}

\item $k+j<0$: Then the leading power in (\ref{residuedifferential1}) is $\zeta^{r+\ell-2k-j}d\zeta$, and there is a residue contribution if and only if
$r+\ell-2k-j=-1$. 

\begin{itemize}
\item If $k=0$ then $j<0$, $\ell\geq  0$, hence $r+\ell-2k-j> 0$ and there is {\bf no residue}. 
\item If $k\ne 0$, then $r+\ell -2k-j=r-1-(k+j)\geq r\geq 0$, and there is {\bf no residue}. 
\end{itemize}

\end{itemize}

To prove (\ref{sfqi2}) we investigate  the second differential in (\ref{differentials}):
\begin{equation}\label{residuedifferential2}
\frac{hf^*df}{1+ff^*}={b}{c}\cdot\frac{\zeta^{r+j+\ell} d\zeta+\dots}{1+ac\zeta^{k+j}+\dots}.
\end{equation}
There are again several cases.

\begin{itemize}
\item $k+j>0$:  In this case there is a residue contribution at $\zeta=0$ if and only if $r+j+\ell=-1$. The size of the residue is $bc$.

\begin{itemize}
\item If $k=0$, then $j>0$, hence $r+j+\ell > 0$ and there is {\bf no residue}.
\item If $k\ne 0$, then $r+j+\ell=r+j+k-1\geq r\geq 0$, with still {\bf no residue}.
\end{itemize}

\item $k+j=0$ and $1+ac\ne 0$: Again there is residue provided $r+j+\ell =-1$.

\begin{itemize}
\item If $k=0$, hence $j=0$, then $r+j+\ell\geq 0$ and there is {\bf no residue}.
\item If $k\ne 0$, so that $r+j+\ell=r+j+k-1=r-1$, there is a residue if $r=0$.
The size of the residue is $bc=kac$.

\begin{itemize}
\item If $k>0$ then $j<0$. This is {\bf excluded by assumption}.

\item If $k<0$ then $j>0$. This is {\bf excluded by assumption}.

\end{itemize}

\end{itemize}

\item
$k+j=0$ and $1+ac=0$. Then $1+ff^*$ has a zero at $\zeta=0$, of order $s\geq 1$ say: 
$$
1+f(\zeta)f^*(\zeta)=q\zeta^s+ \mathcal{O}(\zeta^{s+1}),\quad q\ne 0.
$$ 
The differential in (\ref{residuedifferential2}) now is 
$$
\frac{hf^*df}{1+ff^*}=\frac{bc}{q}\cdot \zeta^{r+j+\ell-s}d\zeta+\text{higher powers}.
$$ 
and there is a residue if  $r=s+k-\ell-1$.

This case represents the {\bf surviving residues} in equation (\ref{sfqi2}), namely those coming from points where $1+ff^*=0$.

\item $k+j<0$: Then the leading power in (\ref{residuedifferential2}) is $\zeta^{r+\ell-k}d\zeta$, and there is a residue contribution if and only if
$r+\ell-k=-1$. 

\begin{itemize}
\item If $k=0$ then  $\ell\geq 0$, so $r+\ell -k\geq 0$ and there is {\bf no residue}. 
\item If $k\ne 0$, then $r+\ell -k=r-1$ and there is a residue if $r=0$. 

\begin{itemize}
\item If $k>0$ then $j<0$. This is {\bf excluded by assumption}.
\item If $k<0$ there is a contribution, by $bc/ac=b/a=k=-|k|$. This {\bf residue cancels} together with $|k|$ terms 
named $h(\zeta_j)$ in the second formula of the theorem. 
\end{itemize}
\end{itemize}

\end{itemize}

The above analysis confirms the quadrature identities (\ref{sfqi2}) under the stated assumptions.
In fact, it shows that even without such assumptions (of non-coinciding poles and zeros) the terms with explicit zeros 
and poles in the first formas (\ref{sfqi1}) in the theorem are cancelled by terms in the residue sums. But in general it becomes 
difficult to specify the ranges of these sums.



\bibliography{bibliography_gbjorn.bib}

\def\cprime{$'$} \def\cprime{$'$} \def\cprime{$'$} \def\cprime{$'$}
  \def\cprime{$'$} \def\cprime{$'$}
\begin{thebibliography}{10}

\bibitem{Aharonov-Shapiro-1976}
{\sc D.~Aharonov and H.~S. Shapiro}, {\em Domains on which analytic functions
  satisfy quadrature identities}, J. Analyse Math., 30 (1976), pp.~39--73.

\bibitem{Arnold-1982}
{\sc V.~I. Arnol\cprime~d}, {\em The {N}ewton potential of hyperbolic layers},
  Trudy Tbiliss. Univ., 232/233 (1982), pp.~23--29.

\bibitem{Arnold-1983}
\leavevmode\vrule height 2pt depth -1.6pt width 23pt, {\em Some
  algebro-geometrical aspects of the {N}ewton attraction theory}, in Arithmetic
  and geometry, {V}ol. {II}, vol.~36 of Progr. Math., Birkh\"{a}user Boston,
  Boston, MA, 1983, pp.~1--3.

\bibitem{Arnold-Khesin-1998}
{\sc V.~I. Arnold and B.~A. Khesin}, {\em Topological methods in
  hydrodynamics}, vol.~125 of Applied Mathematical Sciences, Springer-Verlag,
  New York, 1998.

\bibitem{Bers-1964}
{\sc L.~Bers}, {\em An approximation theorem}, J. Analyse Math., 14 (1965),
  pp.~1--4.

\bibitem{Caffarelli-Karp-Shahgholian-2000}
{\sc L.~A. Caffarelli, L.~Karp, and H.~Shahgholian}, {\em Regularity of a free
  boundary with application to the {P}ompeiu problem}, Ann. of Math. (2), 151
  (2000), pp.~269--292.

\bibitem{Clemens-1980}
{\sc C.~H. Clemens}, {\em A scrapbook of complex curve theory}, University
  Series in Mathematics, Plenum Press, New York-London, 1980.

\bibitem{Craik-2000}
{\sc A.~D.~D. Craik}, {\em James {I}vory, {F}.{R}.{S}., mathematician: ``the
  most unlucky person that ever existed''}, Notes and Records Roy. Soc. London,
  54 (2000), pp.~223--247.

\bibitem{Davis-1974}
{\sc P.~J. Davis}, {\em The {S}chwarz function and its applications}, The
  Mathematical Association of America, Buffalo, N. Y., 1974.
\newblock The Carus Mathematical Monographs, No. 17.

\bibitem{Diaconis-Fulton-1990}
{\sc P.~Diaconis and W.~Fulton}, {\em A growth model, a game, an algebra,
  {L}agrange inversion, and characteristic classes}, Rend. Sem. Mat. Univ.
  Politec. Torino, 49 (1991), pp.~95--119 (1993).
\newblock Commutative algebra and algebraic geometry, II (Italian) (Turin,
  1990).

\bibitem{DiBenedetto-Friedman-1986}
{\sc E.~DiBenedetto and A.~Friedman}, {\em Bubble growth in porous media},
  Indiana Univ. Math. J., 35 (1986), pp.~573--606.

\bibitem{Entov-Etingof-1991}
{\sc V.~M. Entov and P.~I. \`Etingof}, {\em Bubble contraction in {H}ele-{S}haw
  cells}, Quart. J. Mech. Appl. Math., 44 (1991), pp.~507--535.

\bibitem{Farkas-Kra-1992}
{\sc H.~M. Farkas and I.~Kra}, {\em Riemann surfaces}, vol.~71 of Graduate
  Texts in Mathematics, Springer-Verlag, New York, second~ed., 1992.

\bibitem{Friedman-Sakai-1986}
{\sc A.~Friedman and M.~Sakai}, {\em A characterization of null quadrature
  domains in {${\bf R}^N$}}, Indiana Univ. Math. J., 35 (1986), pp.~607--610.

\bibitem{Griffiths-Harris-1978}
{\sc P.~Griffiths and J.~Harris}, {\em Principles of algebraic geometry},
  Wiley-Interscience [John Wiley \& Sons], New York, 1978.
\newblock Pure and Applied Mathematics.

\bibitem{Gross-Harris-1981}
{\sc B.~H. Gross and J.~Harris}, {\em Real algebraic curves}, Ann. Sci.
  \'{E}cole Norm. Sup. (4), 14 (1981), pp.~157--182.

\bibitem{Gustafsson-1983}
{\sc B.~Gustafsson}, {\em Quadrature identities and the {S}chottky double},
  Acta Appl. Math., 1 (1983), pp.~209--240.

\bibitem{Gustafsson-1988}
\leavevmode\vrule height 2pt depth -1.6pt width 23pt, {\em Singular and special
  points on quadrature domains from an algebraic geometric point of view}, J.
  Analyse Math., 51 (1988), pp.~91--117.

\bibitem{Gustafsson-Lin-2021}
{\sc B.~Gustafsson and Y.-L. Lin}, {\em Laplacian growth on branched {R}iemann
  surfaces}, vol.~2287 of Lecture Notes in Mathematics, Springer, Cham, [2021]
  \copyright 2021.

\bibitem{Gustafsson-Shapiro-2005}
{\sc B.~Gustafsson and H.~S. Shapiro}, {\em What is a quadrature domain?}, in
  Quadrature domains and their applications, vol.~156 of Oper. Theory Adv.
  Appl., Birkh\"auser, Basel, 2005, pp.~1--25.

\bibitem{Gustafsson-Teodorescu-Vasiliev-2014}
{\sc B.~Gustafsson, R.~Teoderscu, and A.~Vasil{\cprime}ev}, {\em Classical and
  stochastic {L}aplacian growth}, Advances in Mathematical Fluid Mechanics,
  Birkh\"auser Verlag, Basel, 2014.

\bibitem{Gustafsson-Tkachev-2009}
{\sc B.~Gustafsson and V.~G. Tkachev}, {\em The resultant on compact {R}iemann
  surfaces}, Comm. Math. Phys., 286 (2009), pp.~313--358.

\bibitem{Gustafsson-Tkachev-2011}
\leavevmode\vrule height 2pt depth -1.6pt width 23pt, {\em On the exponential
  transform of multi-sheeted algebraic domains}, Comput. Methods Funct. Theory,
  11 (2011), pp.~591--615.

\bibitem{Harris-1992}
{\sc J.~Harris}, {\em Algebraic geometry}, vol.~133 of Graduate Texts in
  Mathematics, Springer-Verlag, New York, 1992.
\newblock A first course.

\bibitem{Hedenmalm-1991}
{\sc H.~Hedenmalm}, {\em A factorization theorem for square area-integrable
  analytic functions}, J. Reine Angew. Math., 422 (1991), pp.~45--68.

\bibitem{Hedenmalm-Korenblum-Zhu-2000}
{\sc H.~Hedenmalm, B.~Korenblum, and K.~Zhu}, {\em Theory of {B}ergman spaces},
  vol.~199 of Graduate Texts in Mathematics, Springer-Verlag, New York, 2000.

\bibitem{Izmestiev-Tabachnikov-2017}
{\sc I.~Izmestiev and S.~Tabachnikov}, {\em Ivory's theorem revisited}, J.
  Integrable Syst., 2 (2017), pp.~xyx006, 36.

\bibitem{Karp-1994}
{\sc L.~Karp}, {\em On the {N}ewtonian potential of ellipsoids}, Complex
  Variables Theory Appl., 25 (1994), pp.~367--371.

\bibitem{Karp-2008}
\leavevmode\vrule height 2pt depth -1.6pt width 23pt, {\em On null quadrature
  domains}, Comput. Methods Funct. Theory, 8 (2008), pp.~57--72.

\bibitem{Kellogg-1967}
{\sc O.~D. Kellogg}, {\em Foundations of potential theory}, Die Grundlehren der
  mathematischen Wissenschaften, Band 31, Springer-Verlag, Berlin-New York,
  1967.
\newblock Reprint from the first edition of 1929.

\bibitem{Kendig-1977}
{\sc K.~Kendig}, {\em Elementary algebraic geometry}, Graduate Texts in
  Mathematics, No. 44, Springer-Verlag, New York-Berlin, 1977.

\bibitem{Khavinson-Lundberg-2014}
{\sc D.~Khavinson and E.~Lundberg}, {\em A tale of ellipsoids in potential
  theory}, Notices Amer. Math. Soc., 61 (2014), pp.~148--156.

\bibitem{Khavinson-Mineev-Putinar-Teororescu-2010}
{\sc D.~Khavinson, M.~Mineev-Weinstein, M.~Putinar, and R.~Teodorescu}, {\em
  Lemniscates do not survive {L}aplacian growth}, Math. Res. Lett., 17 (2010),
  pp.~335--341.

\bibitem{Lee-Makarov-2016}
{\sc S.-Y. Lee and N.~G. Makarov}, {\em Topology of quadrature domains}, J.
  Amer. Math. Soc., 29 (2016), pp.~333--369.

\bibitem{Levine-peres-2010}
{\sc L.~Levine and Y.~Peres}, {\em Scaling limits for internal aggregation
  models with multiple sources}, J. Anal. Math., 111 (2010), pp.~151--219.

\bibitem{Margulis-1995}
{\sc A.~S. Margulis}, {\em The moving boundary problem of potential theory},
  Adv. Math. Sci. Appl., 5 (1995), pp.~603--629.

\bibitem{Sakai-1981}
{\sc M.~Sakai}, {\em Null quadrature domains}, J. Analyse Math., 40 (1981),
  pp.~144--154 (1982).

\bibitem{Sakai-1982}
\leavevmode\vrule height 2pt depth -1.6pt width 23pt, {\em Quadrature domains},
  vol.~934 of Lecture Notes in Mathematics, Springer-Verlag, Berlin, 1982.

\bibitem{Sakai-1983a}
\leavevmode\vrule height 2pt depth -1.6pt width 23pt, {\em The obstacle problem
  and its application}, Kyoto University Bulletin, 502 (1983), pp.~1--12.

\bibitem{Sakai-1988}
\leavevmode\vrule height 2pt depth -1.6pt width 23pt, {\em Finiteness of the
  family of simply connected quadrature domains}, in Potential theory
  ({P}rague, 1987), Plenum, New York, 1988, pp.~295--305.

\bibitem{Shafarevich-1977}
{\sc I.~R. Shafarevich}, {\em Basic algebraic geometry}, Springer Study
  Edition, Springer-Verlag, Berlin-New York, 1977.
\newblock Translated from the Russian by K. A. Hirsch, Revised printing of
  Grundlehren der mathematischen Wissenschaften, Vol. 213, 1974.

\bibitem{Shahgholian-1991a}
{\sc H.~Shahgholian}, {\em On the {N}ewtonian potential of a heterogeneous
  ellipsoid}, SIAM J. Math. Anal., 22 (1991), pp.~1246--1255.

\bibitem{Shahgholian-1992}
\leavevmode\vrule height 2pt depth -1.6pt width 23pt, {\em On quadrature
  domains and the {S}chwarz potential}, J. Math. Anal. Appl., 171 (1992),
  pp.~61--78.

\bibitem{Shapiro-1987}
{\sc H.~S. Shapiro}, {\em Unbounded quadrature domains}, in Complex analysis,
  {I} ({C}ollege {P}ark, {M}d., 1985--86), vol.~1275 of Lecture Notes in Math.,
  Springer, Berlin, 1987, pp.~287--331.

\bibitem{Shapiro-1992}
\leavevmode\vrule height 2pt depth -1.6pt width 23pt, {\em The {S}chwarz
  function and its generalization to higher dimensions}, University of Arkansas
  Lecture Notes in the Mathematical Sciences, 9, John Wiley \& Sons Inc., New
  York, 1992.
\newblock A Wiley-Interscience Publication.

\bibitem{Varchenko-Etingof-1992}
{\sc A.~N. Varchenko and P.~I. Etingof}, {\em Why the Boundary of a Round Drop
  Becomes a Curve of Order Four}, American Mathematical Society AMS University
  Lecture Series, Providence, Rhode Island, third~ed., 1992.

\bibitem{Vainstein-Shapiro-1985}
{\sc A.~D. Va\u{\i}nshte\u{\i}n and B.~Z. Shapiro}, {\em Multidimensional
  analogues of the {N}ewton and {I}vory theorems}, Funktsional. Anal. i
  Prilozhen., 19 (1985), pp.~20--24, 96.

\bibitem{Wilson-1978}
{\sc G.~Wilson}, {\em Hilbert's sixteenth problem}, Topology, 17 (1978),
  pp.~53--73.

\end{thebibliography}

\end{document}